\theoremstyle{plain} 
\newtheorem{thm}{Theorem}[section]
\newtheorem*{introthm*}{Theorem}
\newtheorem{cor}[thm]{Corollary}
\newtheorem{lem}[thm]{Lemma}
\newtheorem{prop}[thm]{Proposition}
\theoremstyle{definition}
\newtheorem{defn}[thm]{Definition}
\newlength{\thmtopspace}                
\newlength{\thmbotspace}                
\newlength{\thmheadspace}               
\newlength{\thmindent}                  
\theoremstyle{remark}
\newtheorem*{rems}{Remark}
\newsavebox\myboxA
\newsavebox\myboxB
\newlength\mylenA
\newcommand*\xoverline[2][0.75]{%
    \sbox{\myboxA}{$\m@th#2$}%
    \setbox\myboxB\null
    \ht\myboxB=\ht\myboxA%
    \dp\myboxB=\dp\myboxA%
    \wd\myboxB=#1\wd\myboxA
    \sbox\myboxB{$\m@th\overline{\copy\myboxB}$}
    \setlength\mylenA{\the\wd\myboxA}
    \addtolength\mylenA{-\the\wd\myboxB}%
    \ifdim\wd\myboxB<\wd\myboxA%
       \rlap{\hskip 0.5\mylenA\usebox\myboxB}{\usebox\myboxA}%
    \else
        \hskip -0.5\mylenA\rlap{\usebox\myboxA}{\hskip 0.5\mylenA\usebox\myboxB}%
    \fi}
\newcommand*\xunderline[2][0.75]{%
    \sbox{\myboxA}{$\m@th#2$}%
    \setbox\myboxB\null
    \ht\myboxB=\ht\myboxA%
    \dp\myboxB=\dp\myboxA%
    \wd\myboxB=#1\wd\myboxA
    \sbox\myboxB{$\m@th\underline{\copy\myboxB}$}
    \setlength\mylenA{\the\wd\myboxA}
    \addtolength\mylenA{-\the\wd\myboxB}%
    \ifdim\wd\myboxB<\wd\myboxA%
       \rlap{\hskip 0.5\mylenA\usebox\myboxB}{\usebox\myboxA}%
    \else
        \hskip -0.5\mylenA\rlap{\usebox\myboxA}{\hskip 0.5\mylenA\usebox\myboxB}%
    \fi}
\newcommand{\overbar}[1]{\mkern 1.5mu\overline{\mkern-1.5mu#1\mkern-1.5mu}\mkern 1.5mu}
\renewcommand{\o}[1]{\xoverline{#1}}
\newcommand*\bigcdot{\mathpalette\bigcdot@{.5}}
\newcommand*\bigcdot@[2]{\mathbin{\vcenter{\hbox{\scalebox{#2}{$\m@th#1\bullet$}}}}}
\newcommand{\bN}{\mathbb{N}}
\newcommand{\bZ}{\mathbb{Z}}
\renewcommand{\t}[1]{\widetilde{#1}}
\newcommand{\xra}[1]{\xrightarrow{#1}}
\newcommand{\Hom}[3]{\operatorname{Hom}_{#1}(#2,#3)}
\newcommand{\oHom}{\operatorname{Hom}}
\newcommand{\dhom}{\delta_{\operatorname{Hom}}}
\newcommand{\dab}[1][AB]{d_{#1}}
\newcommand{\doab}[1][\overbar{A}B]{d_{#1}}
\newcommand{\Ai}{$\text{A}_{\infty}$}
\newcommand{\A}[1]{$\text{A}_{#1}$}
\newcommand{\loccit}{\emph{loc.\ \!cit.\ }}
\newcommand{\hhc}[2]{CC^{\bullet} (#1, #2)}
\newcommand{\hhcu}[2]{CC_{\mathsf{u}}^{\bullet} (#1, #2)}
\newcommand{\hhcn}[3][n]{CC^{#1} (#2, #3)}
\newcommand{\hhcln}[3][n]{CC^{\scaleobj{.75}{\leq} #1} (#2, #3)}
\newcommand{\hhclnu}[3][n]{CC^{\scaleobj{.75}{\leq} #1}_{\mathsf{u}} (#2, #3)}
\newcommand{\Tcon}[2][n]{T_{\mathsf{co}}^{#1}(#2)}
\newcommand{\Coder}[1][]{\operatorname{Coder}_{#1}}
\newcommand{\Coderab}[1][\alpha,\beta]{\operatorname{Coder}^{#1}}
\newcommand{\End}{\operatorname{End}}
\newcommand{\Coalgk}[1][k]{\operatorname{Coalg}_{#1}}
\newcommand{\msu}{\mu_{\operatorname{su}}}
\newcommand{\gsu}{g_{\operatorname{su}}}
\newcommand{\nuend}{\nu_{\operatorname{End}}}
\newcommand{\sleq}[1][.75]{\scaleobj{#1}{\leq}}
\newcommand{\muln}[1][n]{\mu^{\sleq #1}}
\newcommand{\nuln}[1][n]{\nu^{\sleq #1}}
\newcommand{\omln}[1][n]{\overbar{\mu}^{\sleq #1}}
\newcommand{\omuln}[1][n]{\overbar{\mu}^{\sleq #1}}
\newcommand{\hln}[1][n]{h^{\sleq #1}}
\newcommand{\rln}[1][n]{r^{\sleq #1}}
\newcommand{\aln}[1][n]{\alpha^{\sleq #1}}
\newcommand{\bln}[1][n]{\beta^{\sleq #1}}
\newcommand{\gamln}[1][n]{\gamma^{\sleq #1}}
\newcommand{\piln}[2][n]{\left(#2\right)^{\sleq #1}}
\newcommandtwoopt{\hsp}[2][\alpha][\beta]{\,{}_{#1}\!\!*_{#2}}
\newcommand{\obs}[1]{o(#1)}
\numberwithin{equation}{section}
\begin{document}
\title{Transfer of \Ai-structures to projective resolutions}
\author{Jesse Burke}
\address{Mathematical Sciences Institute\\ Australian National University\\ Canberra ACT}

\begin{abstract}
We show that an \Ai-algebra structure can be transferred to a
projective resolution of the complex underlying any \Ai-algebra. Under
certain connectedness assumptions, this transferred
structure is unique up to homotopy. In
contrast to the classical results on transfer of \Ai-structures along
homotopy equivalences, our result is of interest when the ground ring
is not a field. We prove an analog for \Ai-module
structures, and both transfer results preserve strict units.
\end{abstract}

\email{jesse.burke@anu.edu.au}
\maketitle

\setcounter{section}{-1}
\section{Introduction}
It is a classical and motivating result in the theory of \Ai-algebras
that \Ai-structures can be transferred along homotopy
equivalences, i.e., if $f: A \xra{\thicksim} B$ is a homotopy equivalence of complexes, and $B$
has an \Ai-algebra structure, then $A$ has an \Ai-algebra structure,
and $f$ can be extended to a morphism of the
\Ai-algebras. Different versions of this result are proved
in \cite{MR580645,MR662761,MR885535, MR1109665, MR1103672, MR1882331,MR1672242,
  LH}. It is most often used the in case $B$ is a
dg-algebra, $A = H_{*}(B)$ is the
homology algebra of $B$ (with zero differential), and the ground
ring $k$ is a field, so there is a homotopy equivalence between $A$ and $B$.

In this paper we show that if $B$ has an \Ai-algebra structure, and
$q: A \to B$ is a projective resolution  of
the complex underlying $B$, over the ground ring $k$, then $A$ has
an \Ai-algebra structure such that $q$ is a strict morphism of
\Ai-algebras (by projective resolution we mean a cofibrant
replacement in the projective model category structure on chain
complexes over $k$). If $B$ satisfies $H_{i}(B) = 0$ for all $i < 0,$ then
the transferred structure on $A$ is unique up to homotopy. If the \Ai-algebra structure on $B$ is strictly unital,
then the transferred structure is also strictly unital, under a mild
assumption on $A$. We prove analogous results for
\Ai-modules.  Note that if $k$ is not a field, then a projective
resolution is generally not a homotopy equivalence, so the earlier
results do not apply.

These transfer results are a technical tool in developing Koszul
duality relative to an arbitrary commutative base ring. By Koszul
duality we mean in the generalized sense of using the bar construction,
or some small replacement of it, to study the homological algebra
of an algebra $B$ (associative, dg, or \Ai), e.g., to construct canonical $B$-projective resolutions of
$B$-modules. If $B$ is not projective as a module over $k,$ then its
bar construction can be nonsensical. For instance, if
$B = k/I$ is a cyclic $k$-algebra, the bar construction of $B$ is an
infinite sequence of copies of $B$ with zero differential. This tells
us nothing about the structure of $B$ and seems to dash any hope of
using the bar construction to construct resolutions. The transfer
results proved here offer an alternative: resolve $B$ over $k$,
transfer the algebra structure to the $k$-resolution, use the bar
construction to construct resolutions there, and then try to massage the
result back down to $B$. This is carried out in detail in the case $B
= k/I$ is cyclic in \cite{1508.03782}, and we hope to return to the
general case in future work.

The proofs of our results use obstruction
theory and the lifting
properties of projective resolutions. Obstruction theory has been used
in the construction of \Ai-structures at least since
\cite{MR662761}. Sullivan, in the context of topology, described
obstruction theory as
\begin{quote}
\ldots much like being in a labyrinth with a weak miner's light
attached to your forehead and being forced always to move forward. The
light enables you to see if you may take your next step but it is not
strong enough to tell you which fork to take when you must make a
decision \cite[\S 6.3]{MR3136262}.
\end{quote}
We can use this analogy to illustrate the relation of the present work to
the classical results on transferring via homotopy equivalence. Our results show that there is always a path
through the labyrinth of
transferring an \Ai-algebra structure to a projective resolution, and
in many cases this path is unique up to homotopy, but we do not
have global information, e.g., a map of the labyrinth. The classical homotopy
transfer results give much more detailed
information, formulated e.g., with SDR data, on the labyrinth of transferring an \Ai-algeba structure
via a homotopy equivalence. Additional
information in specialized situations seems to be needed to better understand
the \Ai-structures that result in the case of resolutions.

Finally, we mention that the results here are easily dualized to give
transfer results for injective resolutions, in the case of
  augmented or nonunital \Ai-algebras. The way we handle the transfer
  of strictly unital (but potentially non-augmented) \Ai-algebras
  assumes there is a free summand in degree zero. It is not clear at
  the moment how to adapt this to the injective case, since injective
  modules do not have free summands. 

\renewcommand{\contentsname}{}
\tableofcontents

\section{Notation and sign conventions}\label{sect:notation}
\begin{enumerate}
\item Throughout, $k$ is a fixed commutative ring. By module, complex, map,
  etc.\ we mean $k$-module, $k$-linear map, etc.  We place no boundedness or
  connectedness assumptions on complexes.

  \item For graded modules
  $M,N,$ $\Hom {} M N$ and $M \otimes N$ are graded by:
  \begin{displaymath}
    \Hom {} M N_n = \prod_{i \in \bZ} \Hom {} {M_i} {N_{i + n}} \quad (M
    \otimes N)_n = {\bigoplus_{i \in \bZ} M_i \otimes N_{n - i}}.
  \end{displaymath}
  If $(M, \delta_{M})$ and $(N, \delta_{N})$ are complexes, then
  $\Hom {} M N$ and $M \otimes N$ are complexes with differentials
  $\dhom$ and $\delta_{\otimes}$ given by:
  \begin{displaymath}
    \dhom(f) = \delta_{N}f - (-1)^{|f|}f \delta_{M} \quad \delta_{\otimes} = \delta_M \otimes 1 + 1 \otimes \delta_N.
  \end{displaymath}
  A morphism of complexes is a cycle in the complex
  $(\Hom {} M N, \dhom).$ A quasi-isomorphism is a morphism that
  induces an isomorphism in homology.

\item All elements of graded objects are assumed to be homogeneous. We
  write $|x|$ for the degree of an element $x$. Set $\Pi$ to be the
  endofunctor of the category of graded modules defined by $(\Pi M)_n =
  M_{n-1}$ and $(\Pi f)[m] = [f(m)]$ for a
  morphism $f$, where $[m] \in (\Pi M)_{n}$ is the element
  corresponding to $m \in M_{n-1}$. There is a degree one natural
  transformation $1 \xra{s} \Pi$ that is the identity on every graded
  module, i.e.,  $s(x) =[x] \in \Pi M.$ If
  $(M, \delta_{M})$ is a complex, we set
  $\delta_{\Pi M} = -s\delta_M s^{-1}$, so
  $s: (M, \delta_{M}) \to (\Pi M, \delta_{\Pi M})$ is a degree one
  cycle in
  complex $(\Hom {} M {\Pi M}, \dhom)$. We write
  $[x_1|\ldots|x_n] = sx_{1}\otimes \ldots\otimes s x_n.$ 

\item Signs are introduced when applying a tensor product of morphisms
  as follows:
  $(f \otimes g)(x \otimes y) = (-1)^{|g||x|}f(x) \otimes g(y)$.
\end{enumerate}

\section{Definitions}
In this section we collect various definitions we need to
 precisely state our main results. See e.g.,
\cite{MR2844537,MR1854636,MR2596638} for an introduction to, and further context
on, \Ai-objects, and \cite{SUal} for an expanded version of the
development below, using string diagrams. Throughout this section $A$
and $B$ denote graded modules.

\begin{defn}\label{defn-of-three-prods-on-hhc}
For $l \in \bN,$ set $\hhcn [l] A B = \Hom {} {(\Pi A)^{\otimes l}} {\Pi B},$
  and for $n \in \bN \cup \{ \infty \},$ set
  $\hhcln A B = \prod_{1 \leq l \leq n} \hhcn [l] A B.$
We write $\hhc A B$ for $\hhcln [\infty] A B$. Elements $f \in \hhc A
B$ are denoted $f = (f^{l})$, with $f^l \in
 \hhcn [l] A B$ \emph{the $l$th tensor homogeneous component}. Since $A$ and $B$ are graded,
so is $\hhcln A B$, using the convention on gradings of Hom and tensor products. We denote the
$i$th homogeneous component of this grading as $\hhcln A B_{i}$.
  \begin{enumerate}
 \item The \emph{Gerstenhaber product} of $\mu = (\mu^{l}) \in \hhc A B_{i}$ and $\nu =
(\nu^{l}) \in \hhc A A_{j}$ is $\mu \circ \nu = ((\mu \circ \nu)^{l}) \in \hhc A B_{i+j},$
with
$$(\mu \circ \nu)^{l} = \sum_{\substack{1 \leq i \leq l\\0 \leq j
    \leq i-1}}  \mu^{i}
(1^{\otimes j} \otimes \nu^{l-i+1} \otimes 1^{\otimes i - j - 1}).$$

\item The \emph{*-product} of $\nu =
(\nu^{l}) \in \hhc B B_{-1}$ and $\alpha = (\alpha^{l}) \in \hhc A B_{0}$ is $\nu*\alpha =
((\nu*\alpha)^{l}) \in \hhc A B_{-1}$, with
\begin{displaymath}
(\nu*\alpha)^{l} = \sum_{\substack{1 \leq j \leq l\\i_{1} + \ldots + i_{j} = l}} \nu^{j}(\alpha^{i_1} \otimes \ldots
\otimes \alpha^{i_{j}}).
\end{displaymath}

\item The \emph{homotopy *-product} of $\nu \in \hhc B B_{-1}$ and $r
  \in \hhc A B_{1}$ with respect to $\alpha, \beta \in \hhc A B_{0}$
  is $(\nu \hsp r) = ((\nu \hsp r)^{l}) \in \hhc {A} B_{0},$ with
\begin{displaymath}
(\nu \hsp r)^{l} = \sum_{\substack{1 \leq i \leq l, 1 \leq k \leq i\\j_{1} +
  \ldots + j_{i} = l}}
 \nu^{i}(\alpha^{j_1}\otimes\ldots\otimes\alpha^{j_{k-1}}\otimes r^{j_k}\otimes
\beta^{j_{k+1}} \otimes \ldots \otimes \beta^{j_i}).
\end{displaymath}
\end{enumerate}
\end{defn}

For any $n
 \geq 1$, $\hhcln A A$ is a submodule and quotient module of $\hhc A
 A.$ The inclusion allows us to apply the three products defined above to elements of
 $\hhcln A A,$ but note that $\hhcln A A$ is not closed under any of
 these operations. We write $\piln {-}: \hhc A A \to \hhcln A A$ for
the canonical projection.

\begin{defn} Let $A$ and $B$ be graded modules and fix $n \in \bN \cup \{\infty\}$.
  \begin{enumerate}
\item  A \emph{nonunital \A n-algebra structure on $A$} is an
element $\nu \in \hhcln A A_{-1}$ such that $\left (\nu \circ
\nu\right )^{\sleq n} = 0.$ (The pair $(A, \nu)$ is then an \A n algebra.)

\item  A \emph{morphism between nonunital \A n algebras
$(A, \nu_{A}) \to (B, \nu_{B})$} is
an element $\alpha \in \hhcln A B_{0}$ such that
$\piln {\nu_{B} * \alpha - \alpha \circ \nu_{A}} = 0.$

\item A \emph{homotopy between morphisms of nonunital \A n-algebras $\alpha, \beta:
  (A, \nu_{A}) \to (B, \nu_{B})$} is an element $r \in \hhcn [n] A
  B_{1}$ such that $\alpha - \beta = \piln{\nu_{B}\hsp r - r \circ \nu_{A}}.$
\end{enumerate}
\end{defn}

The unital versions of the above are defined next. The adjective
strict is used to distinguish from the weaker notion of a homotopy unit,
see e.g., \cite[\S 3.2]{LH}.
\begin{defn}\label{defn:strict-unit}
  Let $A, B$ be graded modules with fixed elements $1 \in A_{0},$ 
  $1 \in B_{0}$ and fix $n \in \bN \cup \{ \infty \}.$
  \begin{enumerate}
  \item An element $\nuln \in \hhcln A A_{-1}$ is
    \emph{strictly unital} (with respect to $1 \in A_{0}$) if
    $\nu^{2}[1|a] = [a] = (-1)^{|a|}[a|1]$ for all $a \in A$ and
    $\nu^{k}[a_{1}|\ldots|a_{i}|1|a_{i+1}|\ldots|a_{k-1}] = 0$ for all
    $a_{1}, \ldots, a_{k-1} \in A$ with $k \neq 2$ and $k \leq n$. An
    \A n-algebra $(A, \nuln)$ is strictly unital if $\nuln$ is
    strictly unital in the above sense.
    
  \item  An
element $\aln \in \hhcln A B_{0}$ is \emph{strictly unital} if
$\alpha^{1}[1] = 1$ and for all $2 \leq k \leq n$ and
$a_1, \ldots, a_{k-1} \in A,$ we have 
$\alpha^{k}[a_1|\ldots|a_{i}|1|a_{i+1}|\ldots|a_{k-1}] = 0.$ A morphism of
strictly unital \A n-algebras is
strictly unital if it is strictly unital in the above sense.

\item An element $\rln \in \hhcln A B_{1}$ is \emph{strictly unital}
  if for all  $1 \leq k \leq n$ and all
$a_1, \ldots, a_{k-1} \in A,$ we have
$r^{k}[a_1|\ldots|a_{i}|1|a_{i+1}|\ldots|a_{k-1}] = 0.$  A homotopy
between strictly unital morphisms is strictly unital if it is strictly unital in
the above sense.
\end{enumerate}
\end{defn}

We now recall definitions related to \A n-modules. It is a small but important
point that \A n-module structures are naturally defined over \A {n-1}-algebras
(as opposed to \A n-algebras).

\begin{defn}
  Let $A$ be a graded module and $(M, \delta_{M})$ a complex.
  \begin{enumerate}
  \item The graded module $\Hom {} M M$ is a canonically a dg algebra, with multiplication given by
    composition and differential $\dhom = [\delta_{M}, -].$
    We denote by $(\End M, \nuend)$ the corresponding \Ai-algebra, so
    $\nuend^{1} = \delta_{\Pi \oHom} = -s\dhom s^{-1}$ and
    $\nuend^{2} = -s\gamma(s^{-1})^{\otimes 2},$ where $\gamma$
    denotes composition. See also \cite[Remark
    2.10]{SUal}.
  
    \item If $(A, \nu^{\leq n-1})$ is a nonunital \A {n-1}-algebra, an \emph{\A n-module structure on
      $(M, \delta_{M})$} is a morphism of \A {n-1}-algebras
    $p_{M} \in \hhcln [n-1] A {\End M}.$
  \end{enumerate}
\end{defn}

One can use the isomorphism $\hhcn [l-1] A {\End M}_{0} \cong \Hom {}
{(\Pi A)^{\otimes l-1} \otimes M} M_{-1}$ to write an \A n-module structure
$p_{M} = (p_{M}^{l})$ as a sequence of maps $m^{l}: (\Pi A)^{\otimes
  l-1} \otimes M \to M,$ with $1 \leq l \leq n,$ giving an idea why
this is called an \A n-module structure.

It will be helpful to expand $\hhcln A B$ by allowing
tensor degree zero elements. Set $\hhclnu A B = 
\prod_{0 \leq l \leq n} \hhcn [l] A B =  \Pi B \oplus \hhcln A B.$ Given an \A n-module structure $p_{M} \in \hhcln [n-1] A {\End M}_{0}$
on a complex $(M, \delta_{M})$, we consider
$\delta_{M} + p_{M} \in \hhcu A {\End M}_{0}$. Conversely, we can view
 \A n-module structures as elements $p_{M} \in \hhclnu A {\End M}_{0}$
such that $p^{0}_{M} \in \End M$ is a differential and $p^{\geq 1}_{M}$ is an
morphism of \A {n-1}-algebras from $A$ to the endomorphism \Ai-algebra of the complex
$(M, p^{0}_{M}).$ 

To define morphisms of \A n-modules, we add to the list of products
in \ref{defn-of-three-prods-on-hhc}.
\begin{defn} Let $A, M, N, P$ be graded modules.
  \begin{enumerate}
  \item Define the $\star$ product as follows,
    \begin{gather*}
      \star: \hhcu A {\Hom {} N P}_{k} \otimes \hhcu A {\Hom {} M
        N}_{l} \to \hhcu A {\Hom
        {} M P}_{k + l -1 }\\
      \alpha \otimes \beta = (\alpha^n) \otimes (\beta^n) \mapsto
      \left (s \gamma (s^{-1} \otimes s^{-1}) \sum_{j = 0}^{n}
        \alpha^j\otimes \beta^{n-j}\right) = \alpha \star \beta,
    \end{gather*}
    where $\gamma$ is the composition map.

   \item Let $(A, \nuln [n-1])$ be a nonunital \A {n-1}-algebra, for some
    $n \in \bN \cup \{ \infty \}$. An  element
    $f \in \hhclnu [n-1] A {\Hom {} M N}_{1}$ is a \emph{morphism of \A n-modules
      $(M, p_{M}) \to (N, p_{N})$} if
    $$p_{N} \star f + (f^{\geq 1})\circ \mu + f \star p_{M} = 0.$$
    The composition with a second morphism,
    $\t f \in \hhcu {A} {\Hom {} N P}_{1},$ is
    $\t f \star f \in \hhcu A {\Hom {} M N}_{1}.$
    \item An element
    $r \in \hhclnu [n-1] A {\Hom {} M N}_{2}$ is a \emph{homotopy
      between morphisms of \A n-modules
      $f, g: (M, p_{M}) \to (N, p_{N})$} if
   $$f - g = r \circ \nuln [n-1] + p_{N} \star r + r \star p_{M}.$$
  \end{enumerate}
\end{defn}

The unital version of \A n-modules is the following:
\begin{defn}
  Let $(A, \nuln [n-1])$ be a strictly unital \A {n-1}-algebra. An \A n-module $(M, p_{M})$ is strictly unital if $p_{M}$ is a
    strictly unital morphism of \A {n-1}-algebras.  A morphism of strictly unital
\A n-modules $(M, p_{M})\to (N, p_{N})$ is a morphism of
\A n-modules $f^{\leq n-1}$ such that $f^{\leq n-1}$ is in $\hhclnu [n-1] {\o A}
{\Hom {}M N}_{1},$ i.e., for all $1 \leq j \leq n$ and $a_{1}, \ldots,
a_{j-1} \in A,$ we have
$$f^{j}([a_{1}|\ldots|a_{i-1}|1|a_{i}|\ldots|a_{j-1}]) = 0.$$
A homotopy between morphisms of strictly unital \A n-modules $f^{\leq
  n}, g^{\leq n}: (M, p_{M})\to (N, p_{N})$ is a homotopy between
morphisms of \A n-modules $r^{\leq n-1}$ such that $r^{\leq n-1}$ is in $\hhclnu [n-1] {\o A}
{\Hom {}M N}_{2}$.
\end{defn}

To transfer strictly unital \Ai-structures, we need to place a further assumption on the pair $(A, 1).$

\begin{defn}\label{defn:split-elt}
  A \emph{split element} of a graded module $A$ is an element that
  generates a rank one free module. A \emph{graded module with split
    element} is a pair $(A, 1)$ with $1$ a split element in $A_{0}$, and a fixed (unlabeled) splitting $A \to k$ of the
  inclusion $k \to A, 1 \mapsto 1.$ Morphisms of graded modules with
  split elements $(A, 1) \to (B, 1)$ are always assumed to preserve
  the fixed splittings. An \emph{\A n algebra with split
    unit} is a triple $(A,1, \nuln)$, where $(A, 1)$ is a
  graded module with split element and $\nuln$ is an \A n
  algebra structure on $A$ that is strictly unital with respect to
  $1$.

  If $(A, 1)$ is a graded module with split element, we set $\o A =
A/(k \cdot 1),$ and consider it as a submodule of $A$ via the fixed splitting of $1.$ The projection $A \to \o
A$ identifies $\hhcln {\o A} A$ as a submodule of $\hhcln A A.$ The \emph{trivial
   strictly unital \Ai-structure on $(A,1)$} is a strictly unital
 element, denoted $\msu^{2} \in \hhcn [2] A A_{-1},$ such that for $n
 \geq 2,$ every strictly unital element
 $\nuln \in \hhcln A A_{-1}$ can be written $\nuln = \muln
 + \msu$ for a unique $\muln \in \hhcln {\o A} A_{-1};$ see
 \cite[Definition 4.3, Lemma 4.4]{SUal} for details. Analogously, there is a \emph{trivial
   strictly unital morphism} $\gsu \in \hhcn [1] A B_{0}$ and
 every strictly unital element $\aln \in \hhcln A B_{0}$ can be written uniquely as $\aln
 = \bln + \gsu,$ for $\bln \in \hhcn {\o A} B_{0}.$ It is
 clear that an element $r \in \hhcn A B_{1}$ is strictly unital if and
 only if $r$ is in $\hhcn {\o A} B_{1}.$
\end{defn}

\begin{rems}
  If $k$ is a field, then every strictly unital \A n algebra is an \A
  n algebra with
  split unit, since every element of $A$ is split. This is no longer
  the case if $k$ is not a field: if $I$ is a nonzero ideal of $k$,
  then $k/I$ has a strict, but not a split, unit.
\end{rems}

We
need the following homological algebra of complexes.

\begin{defn}\label{defn:semiproj-cx}
Let $(P, d_{P})$ and $(A, d_{A})$ be complexes, and recall the Hom-complex
$(\Hom {} P A, \dhom)$ has differential $\dhom(f) = d_{A} f -
(-1)^{|f|} f d_{P}.$ If $f: (A, d_{A}) \to (B, d_{B})$ is a morphism
of complexes, we have the following morphisms of
complexes,
\begin{align*}
  f_{*} =& \Hom {} P f: (\Hom {} P A, \dhom) \to (\Hom {} P B, \dhom)\\
  f^{*} =& \Hom {} f P: (\Hom {} B P, \dhom) \to (\Hom {} A P, \dhom).
\end{align*}
Thus $\Hom {} P -$ and $\Hom {} - P$ are endofunctors of the category
of complexes.

A complex $(P, d_{P})$ is \emph{semiprojective} if for every
   surjective quasi-isomorphism $f$, the morphism
   $f_{*} = \Hom {} P f$ is also a surjective quasi-isomorphism. A \emph{semiprojective resolution} of a complex $M$ is a
  quasi-isomorphism $P \xra{\simeq} M,$ with $P$ semiprojective.
\end{defn}

  \begin{rems}\label{chk:semiprojs}
If $k$ is a field, then every complex is semiprojective. If $P_n = 0$
for all $n \ll 0$, then $P$ is semiprojective if and only if each $P_n$ is
a projective $k$-module. In particular, if $M$ is
concentrated in degree 0 (i.e., $M$ is a module), then a projective resolution of $M$ is a semiprojective resolution. If $P$ is semiprojective, then $P_{n}$ is a projective $k$-module for
all $n,$ but not every complex of projective modules is semiprojective; for
instance, $ \ldots \to \bZ/4\bZ \xra{2} \bZ/4\bZ \xra{2}\bZ/4\bZ \to \ldots,$  where
$k = \bZ/4\bZ$. Every complex has a surjective semiprojective resolution.
    
  Semiprojective complexes are the cofibrant objects in the projective
  model structure on the category of $k$-complexes
  \cite[2.3]{MR1650134}.  Semifree complexes, of which
  semiprojective are summands, were first defined in
  \cite{AvHa86}. Semiprojective complexes are the K-projective
  complexes of projectives, using the terminology of \cite{Sp88}, and
  cell $k$-modules, using the terminology of
  \cite{MR1361938}.
\end{rems}

\section{Statement of results}
\begin{thm}\label{thm:main-transfer-result} Let $(B, \nu_{B})$ be a strictly unital \Ai-algebra.
  \begin{enumerate}
  \item Let $q:
    (\Pi A, \nu_{A}^{1})
    \to (\Pi B, \nu^{1}_{B})$ be a surjective semiprojective
    resolution of the complex underlying $(B, \nu_B)$, and assume that $A$ has a split element $1 \in A_{0}$
    with $\nu_{A}^{1}([1]) = 0$ and $q([1]) = [1].$ There exists
    $\nu_{A} \in \hhc A A_{-1},$ extending $\nu^{1}_{A},$ such that
    $(A, 1, \nu_{A})$ is an \Ai-algebra with split unit and $q$ is a
    strict morphism.\footnote{
A morphism of \Ai-algebras $f = (f^{l}) \in \hhc A B_{0}$ is \emph{strict} if
$f^{l} = 0$ for all $l \geq 2.$} If $(B, \nu_{B})$ is augmented, then
we can choose $\nu_{A}$ such that $(A, \nu_{A})$ is augmented.\footnote{A
  strictly unital \Ai-algebra $(B, \nu_{B})$ is \emph{augmented} if
  there is a strictly unital morphism $(B, \nu_{B}) \to (k, \msu)$.}
    
\item Consider a diagram of strictly unital \Ai-algebras,
\begin{displaymath}
    \begin{tikzcd}
      & (A, \nu_{A}) \ar[d, "q"]\\
      (C, \nu_{C}) \ar[r, "\alpha"'] & (B, \nu_{B}).
    \end{tikzcd}
  \end{displaymath}
  Assume that $(C, 1, \nu_{C})$ is an \Ai-algebra with split unit such
  that $(\Pi C, \nu_{C}^{1})$ is a semiprojective complex, and that $q
  = q^{1}$ is strict, with $q^{1}: (\Pi A, \nu_A^{1}) \to (\Pi B,
  \nu_B^{1})$ a surjective
  quasi-isomorphism of complexes. Assume further that there is a
  morphism of chain complexes $\delta^{1}: (\Pi C, \nu^{1}_{C}) \to
  (\Pi A, \nu^{1}_{A})$ such that $\delta^{1}[1] = [1]$ and $q^{1}
  \delta^{1} = \alpha^{1}.$ Then there exists a strictly
  unital morphism of \Ai-algebras $\delta: (C, \nu_{C}) \to (A, \nu_A)$ such that
  $q \delta = \alpha.$

  \item If $H_{0}(\hhcn [n+1] {\o C} A, \dab [\overbar{C}A])= 0$ for all $n \geq 1,$ and $\delta'$ is
  another lifting of $\alpha$ through $q$, then $\delta$ and $\delta'$ are homotopic by a strictly unital homotopy:
  \begin{displaymath}
    \begin{tikzcd}
      & A \ar[d]\\
      C \ar[ur, dotted, "\delta", shift left = 1ex] \ar[ur, dotted,
      "\delta'"', "{\rotatebox[origin=c]{45}{$\sim$}}", shift left =
      -1ex, outer sep = -.5ex] \ar[r] & B.
    \end{tikzcd}
  \end{displaymath}
  In particular, if $A$ satisfies $H_{0}(\hhcn [n+1] {\o A} A, \dab
  [\overbar{A}A])= 0$ for all $n \geq 1,$ then any two elements $\nu_{A}, \t \nu_{A} \in \hhc A A,$ such that $(A, 1,
  \nu_{A})$ and $(A, 1, \t \nu_{A})$ are \Ai-algebras with split units
  and $q$ is a strict morphism, are homotopy equivalent (via strictly
  unital homotopies).
  \end{enumerate}
\end{thm}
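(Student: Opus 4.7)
The plan is to prove all three parts by obstruction theory, building the higher tensor-degree components of the required \Ai-structures, morphisms, and homotopies by induction on $n$. The key input at each stage will be the lifting property of the semiprojective complex $(\Pi A)^{\otimes n}$ (resp.\ $(\Pi C)^{\otimes n}$) against the surjective quasi-isomorphism $q^{1}$, combined with the fact that tensor powers of semiprojective complexes remain semiprojective. Strict unitality will be maintained throughout by using the trivial strictly unital structures $\msu$ and $\gsu$ from Definition \ref{defn:split-elt}: writing $\nu_{A} = \mu_{A} + \msu$ with $\mu_{A}$ taking values through the splitting $\o A \subseteq A$ automatically guarantees strict unitality, so at each induction step I only need to construct a reduced component $\mu^{n}_{A} \in \hhcn [n] {\o A} A$ rather than a full piece in $\hhcn [n] A A$.

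For part (1), the induction begins with $\nu_{A}^{1}$. Assuming $\nu_{A}^{\leq n-1}$ has been constructed so that $(\nu_{A} \circ \nu_{A})^{\leq n-1} = 0$, the data is strictly unital, and $q^{1}\nu_{A}^{k} = \nu_{B}^{k}(q^{1})^{\otimes k}$ for $k \leq n-1$, a standard manipulation of the partial Stasheff identities shows that the candidate obstruction read off from the degree-$n$ part of $\nu_{A} \circ \nu_{A}$ is a $\dhom$-cycle in $\hhcn [n] {\o A} A$, and its pushforward along $q^{1}$ equals $\dhom(\nu_{B}^{n}(q^{1})^{\otimes n})$, using that $(B, \nu_{B})$ is already an \Ai-algebra. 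By Definition \ref{defn:semiproj-cx} and semiprojectivity of $(\Pi A)^{\otimes n}$, the map $q^{1}_{*}$ is a surjective quasi-isomorphism on the relevant Hom-complexes, and a short diagram chase produces $\nu_{A}^{n}$ satisfying both $q^{1}\nu_{A}^{n} = \nu_{B}^{n}(q^{1})^{\otimes n}$ and the obstruction equation simultaneously. The augmented case is handled by additionally tracking compatibility with the fixed augmentation. Part (2) proceeds in the same spirit: starting from $\delta^{1}$, build $\delta^{n}$ inductively so that $q^{1}\delta^{n} = \alpha^{n}$ and the \Ai-morphism equation $\piln{\nu_{A} * \delta - \delta \circ \nu_{C}} = 0$ holds through tensor degree $n$, using the same lifting ingredients applied to Hom-complexes out of $(\Pi C)^{\otimes n}$.

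For part (3), given two strictly unital lifts $\delta, \delta'$, I build the components $r^{n}$ of a strictly unital homotopy inductively. At each stage the obstruction to extending $r^{\leq n-1}$ to $r^{\leq n}$ is a $\dhom$-cycle of degree zero in $\hhcn [n+1] {\o C} A$; the hypothesis $H_{0}(\hhcn [n+1] {\o C} A, \dab [\o C A]) = 0$ is precisely what is needed to write this cycle as a boundary and define $r^{n}$. The ``in particular'' claim then follows by applying (2) and (3) to compare any two structures $\nu_{A}, \t \nu_{A}$ making $q$ strict, using the identity on $\Pi A$ as the initial chain-level datum. The principal technical hurdle throughout will be the bookkeeping required to verify that each candidate obstruction is indeed a $\dhom$-cycle and factors through $\o A$ (resp.\ $\o C$), so that strict unitality is preserved; these computations rest on the standard but delicate identities for the $\circ$, $*$, and $\hsp$ products and on the structural properties of $\msu$ and $\gsu$ from Definition \ref{defn:split-elt}.
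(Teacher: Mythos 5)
Your proposal follows essentially the same route as the paper: inductive obstruction theory, the decomposition $\nu_A = \mu_A + \msu$ to force strict unitality and reduce to the subcomplex $\hhcn[n]{\o A}{A}$, pushing the obstruction cycle forward along the surjective quasi-isomorphism $\varphi = \hhcn[n]{\o A}{q}$, lifting the target component $\nu_B^n(q^1)^{\otimes n}$ through $\varphi$, and using acyclicity of $\ker\varphi$ to correct the lift so that the obstruction equation and compatibility with $q$ hold simultaneously; parts (2) and (3) proceed analogously. One detail to pin down in part (3): the hypothesis $H_0(\hhcn[n+1]{\o C}{A},\dab[\overbar{C}A]) = 0$ only bites for $n \geq 1$, so the base component $r^1$ must be produced instead from $\gamma^1 - \gamma'^1 \in \ker\psi$ being a cycle in the acyclic complex $\ker\psi$, not from the $H_0$ vanishing.
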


\begin{rems}
  In part 2 above, there is always a morphism of chain complexes
  $\delta^{1}: (\Pi C, \nu^{1}_{C}) \to (\Pi A, \nu^{1}_{A})$ such
  that $q^{1}\delta^{1} = \alpha^{1},$ using
  the lifting properties of the semiprojective complex $(\Pi C,
  \nu^{1}_{C}).$ But it is not clear that $\delta^{1}$ can be
  chosen such that $\delta^{1}[1] = [1].$ In certain situations such a
  $\delta^{1}$ always exists, e.g., if $A_{0}, B_{0}, C_{0}$ are all
  cyclic $k$-modules. It is also often the case, as in the corollary below, that $A = C$ and
  $\delta^{1}$ can be chosen to be the identity.
\end{rems}

\begin{cor}\label{cor:transfer-for-classical-algs}
Let $B$ be an associative $k$-algebra, and let $\pi: (A, d_{A}) \to B$
be a $k$-projective resolution. Assume that $A$ has a split
element $1 \in A_{0}$ with $\pi(1) = 1 \in B.$ Then $A$ has a strictly
unital \Ai-algebra structure such that $s\pi s^{-1}$ is a
strict morphism of \Ai-algebras, and this is unique up to strictly unital homotopy.
\end{cor}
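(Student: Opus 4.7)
The plan is to derive the corollary directly from Theorem~\ref{thm:main-transfer-result}, viewing $B$ as a strictly unital \Ai-algebra concentrated in homological degree zero. First I would verify the hypotheses of part~(1): the complex $\Pi A$ is bounded below with projective components, hence semiprojective by Remarks~\ref{chk:semiprojs}; the map $q = s\pi s^{-1}\colon \Pi A \to \Pi B$ is a surjective quasi-isomorphism; and the identities $\nu_A^1([1]) = -s\,d_A(1) = 0$ (since $A_{-1} = 0$) and $q([1]) = s\pi(1) = [1]$ follow immediately from the hypotheses. Part~(1) then produces a strictly unital \Ai-structure on $A$ for which $s\pi s^{-1}$ is a strict morphism of \Ai-algebras.

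For uniqueness up to strictly unital homotopy, I would invoke the ``in particular'' clause of Theorem~\ref{thm:main-transfer-result}(3). This reduces the problem to establishing the cohomological vanishing
\[
H_0\bigl(\hhcn[n+1]{\o A}{A},\,\dab[\overbar{A}A]\bigr) = 0 \qquad \text{for every } n \geq 1.
\]
Since $1$ is split and $d_A(1) = 0$, the decomposition $A = k \oplus \o A$ is one of complexes, so $\o A$ is a bounded-below complex of projective modules, hence semiprojective; consequently $(\Pi \o A)^{\otimes n+1}$ is semiprojective and concentrated in homological degrees $\geq n + 1 \geq 2$.

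The main technical point is this vanishing, and the strategy is to first replace $\Pi A$ by $\Pi B$ in the target. By the semiprojectivity of $(\Pi \o A)^{\otimes n+1}$, the surjective quasi-isomorphism $q\colon \Pi A \to \Pi B$ induces a quasi-isomorphism $\Hom{}{(\Pi \o A)^{\otimes n+1}}{\Pi A} \to \Hom{}{(\Pi \o A)^{\otimes n+1}}{\Pi B}$ of Hom-complexes, so it suffices to verify the vanishing on the right-hand side. But $\Pi B$ is concentrated in homological degree $1$ while the source $(\Pi \o A)^{\otimes n+1}$ vanishes below degree $n+1 \geq 2$, so
\[
\Hom{}{(\Pi \o A)^{\otimes n+1}}{\Pi B}_0 = \Hom{}{\bigl((\Pi \o A)^{\otimes n+1}\bigr)_1}{B} = 0,
\]
and the required vanishing follows. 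Nothing else is needed beyond the two applications of Theorem~\ref{thm:main-transfer-result} and this dimension-count argument.
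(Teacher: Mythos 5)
Your argument is correct and follows the paper's proof essentially step for step: view $B$ as the strictly unital \Ai-algebra with $\nu^n_B=0$ for $n\neq 2$, apply Theorem~\ref{thm:main-transfer-result}(1) for existence, and reduce uniqueness to the vanishing $H_0(\hhcn[n+1]{\o A}{A},\dab[\overbar{A}A])=0$, which you establish by the same two steps as the paper (the quasi-isomorphism $\hhcn[n+1]{\o A}{A}\to\hhcn[n+1]{\o A}{B}$ from semiprojectivity of $(\Pi\o A)^{\otimes n+1}$, then a degree count on the target). The only difference of presentation is that you invoke the ``in particular'' clause of part~(3) directly, whereas the paper unpacks it, applying part~(2) twice to produce $\delta$ and $\epsilon$ with $q\delta=q=q\epsilon$ and then part~(3) to show $\epsilon\delta$ and $\delta\epsilon$ are homotopic to the identity; since that clause is part of the theorem statement, your shortcut is legitimate.

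One small imprecision worth flagging: the claim that ``the decomposition $A=k\oplus\o A$ is one of complexes'' is not automatic. While $d_A(1)=0$ guarantees $k\cdot 1$ is a subcomplex, there is no reason that $d_A(A_1)$ lands in $\o A_0$ rather than spilling into $k\cdot 1$, so the fixed splitting $A\to k$ need not be a chain map. What is actually used (as in the paper's treatment of $\doab$) is that $\o A$ carries the \emph{quotient} differential $\nu^1_{\o A}$ induced from $\nu^1_A$. This is always well-defined, and since $\o A_i$ is a projective $k$-module in every degree (a direct summand of $A_i$, with a free rank-one complement in degree~$0$) and $\o A$ is bounded below, $\o A$ is semiprojective anyway, so the conclusion you draw is correct even though the stated reason is slightly off.
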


\begin{proof}[Proof of Corollary \ref{cor:transfer-for-classical-algs}]
We can consider $B$ as an \Ai-algebra $(B, \nu_{B})$ with $\nu^{n}_{B}
= 0$ for all $n \neq 2.$ The complex $(A, d_{A})$ is semiprojective,
 so by \ref{thm:main-transfer-result}.(1)
$\nu_{A}$ exists as claimed. To see the uniqueness statement, assume
that $\nu_{A}$ and $\t \nu_{A}$ both satisfy the hypothesis. By
\ref{thm:main-transfer-result}.(2), with $\delta^{1} = 1,$ there are strictly unital
morphisms $\delta, \epsilon$ with $q \epsilon = q$ and $q \delta = q,$
where $q = s \pi s^{-1},$
\begin{displaymath}
    \begin{tikzcd}
      & A \ar[d, "q"] \ar[dl, dotted,
      "\epsilon", shift left =
      1ex]\\
      A \ar[ur, dotted, "\delta", shift left = 1ex]  \ar[r, "q"'] & B.
    \end{tikzcd}
  \end{displaymath}
It follows that $q \epsilon \delta = q = q \delta \epsilon.$ Now note the canonical
map $(\hhcn [n+1] {\o A}
A, \dab [\overbar{A}A]) \to (\hhcn [n+1] {\o A} B, \dab
[\overbar{A}B])$ is a quasi-isomorphism, since $\o A$ is
semiprojective. Further, $\hhcn [n+1] {\o A} B_{0} \cong \Hom {} {(\o
  A^{\otimes n+1})_{-n}} B = 0$ for $n \geq 1,$ since
$\o A^{\otimes n + 1}$ is concentrated in nonnegative degrees. Thus $H_{0}(\hhcn [n+1] {\o A}
A, \dab [\overbar{A}A])= 0$ for all $n \geq 1,$ and applying
\ref{thm:main-transfer-result}.(3) shows that $\epsilon \delta$ and
$\delta \epsilon$ are homotopic to the identity (the
definition of $(A, \nu_{A})$ and $(A, \t \nu_{A})$ being homotopy equivalent).
\end{proof}

\begin{thm}\label{thm:transfer-of-mod-strs}
Let $(A, 1, \nu)$ be an \Ai-algebra with split unit such that
$(\Pi A, \nu^{1})$ is a semiprojective complex, and let $(M,
p_{M})$ be a strictly unital \Ai-module over $(A, 1, \nu).$
\begin{enumerate}
\item Let $q: (G, p^{0}_{G}) \to (M, p^{0}_{M})$ be a surjective
  semiprojective resolution of the complex underlying $(M, p_{M})$. There exists $p_{G} \in \hhc
  A {\End G}_{0}$, extending $p^{0}_{G},$ such that $(G, p_{G})$ is a
  strictly unital \Ai-module over $(A, 1, \nu_{A})$ and $q$ is a
  strict morphism of \Ai-modules.\footnote{A morphism of \Ai-modules $f = (f^{n})$ is
    strict if $f^{n} = 0$ for all $n \geq 1$.}
  
\item Consider a diagram of strictly unital \Ai-modules over $(A, 1,
  \nu_{A}):$
  \begin{displaymath}
    \begin{tikzcd}
      & (G, p_{G}) \ar[d, "q"]\\
      (N, p_{N}) \ar[r, "\alpha"'] & (M, p_{M}).
    \end{tikzcd}
  \end{displaymath}
  Assume that $(N, p_{N}^{0})$ is a semiprojective complex, and that $q = q^{1}$ is strict, with $q^{1}: (G, p_G^{0}) \to (M,
  p_M^{0})$ a surjective
  quasi-isomorphism of complexes. Then there exists a strictly
  unital morphism of \Ai-modules $\delta: (N, p_{N}) \to (G, p_G)$ such that
  $q \delta = \alpha.$
  
\item If $H_{1}(\hhcn {\o A} {\Hom {} N G}, \dab
[\overbar{A}\oHom]) = 0$ for all $n \geq 1,$ and $\delta'$ is
  another lifting of $\alpha$ through $q$, then the strictly unital
  morphisms $\delta$ and $\delta'$ are homotopic (via a strictly
  unital homotopy):
  \begin{displaymath}
    \begin{tikzcd}
      & G \ar[d]\\
      N \ar[ur, dotted, "\delta", shift left = 1ex] \ar[ur, dotted,
      "\delta'"', "{\rotatebox[origin=c]{45}{$\sim$}}", shift left =
      -1ex, outer sep = -.5ex] \ar[r] & M.
    \end{tikzcd}
  \end{displaymath}
  In particular, if $G$ satisfies $H_{1}(\hhcn {\o A} {\Hom {} G G},
  \dab [\overbar{A}\oHom])= 0$ for all $n \geq 1,$ then any two
  strictly unital \Ai-module structures
 $p_{G}, \t p_{G} \in \hhc A {\End G}_{0},$ such that $q$ is a strict
 morphism, are homotopy equivalent (via strictly unital homotopies).
\end{enumerate}
\end{thm}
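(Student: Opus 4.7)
The plan is to establish all three parts by obstruction theory, constructing the required data inductively on tensor degree. The arguments parallel those of Theorem \ref{thm:main-transfer-result}, with $(\Pi \o A)^{\otimes n-1} \otimes G$ (respectively $\otimes N$) playing the role that $(\Pi \o A)^{\otimes n+1}$ played in the algebra case. The crucial input is that semiprojectivity is stable under tensor product, so the Hom complexes appearing in the obstruction calculus inherit the lifting properties needed to kill obstructions on the target side.

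For part (1), I would build $p_G = (p_G^l)_{l \geq 0}$ inductively on $l$, with $p_G^0$ the given differential, maintaining at stage $n$ that $p_G^{\leq n-1}$ is strictly unital, that the \Ai-module equation $p_G \star p_G + p_G^{\geq 1} \circ \nu = 0$ holds truncated at tensor degree $n-1$, and that $q$ is a strict morphism up to tensor degree $n-1$. A direct manipulation produces an obstruction $o_n$ whose vanishing is equivalent to the existence of an appropriate $p_G^n$, and one verifies $o_n$ is a $\dhom$-cycle in the relevant Hom complex with target $G$. Since $\Pi A$ and $G$ are both semiprojective, so is $(\Pi \o A)^{\otimes m} \otimes G$ for any $m$, and therefore
\[
q_* : \bigl(\Hom {} {(\Pi\o A)^{\otimes m} \otimes G} G, \dhom\bigr) \longrightarrow \bigl(\Hom {} {(\Pi\o A)^{\otimes m} \otimes G} M, \dhom\bigr)
\]
is a surjective quasi-isomorphism. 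The corresponding obstruction for $p_M$ vanishes (since $p_M$ solves the full equation), so $q_*(o_n) = \dhom(h)$ for some $h$; lifting $h$ through the surjection $q_*$ to a $\tilde h$ yields the correction that kills $o_n$. Strict unitality is preserved by projecting all corrections through $\hhcn {\o A} {\End G}$ using the split unit of $A$.

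For part (2), I would build $\delta = (\delta^l)_{l \geq 1}$ inductively, using semiprojectivity of $N$ in place of $G$. The base $\delta^1$ is given. At stage $n$, the obstruction to defining $\delta^n$ is a $\dhom$-cycle in $\Hom {} {(\Pi\o A)^{\otimes n-1} \otimes N} G$, and its image in the corresponding complex with target $M$ is $\dhom$-exact, $\alpha^n$ providing the witness. Semiprojectivity of $(\Pi\o A)^{\otimes n-1} \otimes N$ makes $q_*$ a surjective quasi-isomorphism, so a compatible primitive exists on the $G$ side, yielding $\delta^n$ preserving strict unitality.

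For part (3), construct the strictly unital homotopy $r = (r^l)$ between $\delta$ and $\delta'$ inductively; at stage $n$ the obstruction is a $\dab [\overbar{A}\oHom]$-cycle in $\hhcn {\o A} {\Hom {} N G}_1$, and the assumed vanishing $H_1(\hhcn {\o A} {\Hom {} N G}, \dab [\overbar{A}\oHom]) = 0$ supplies a primitive to serve as $r^n$. The special case of two transferred structures $p_G, \tilde p_G$ on the same $G$ then follows as in Corollary \ref{cor:transfer-for-classical-algs}, by lifting the identity through each structure and appealing to the uniqueness of lifts just established. The main obstacle throughout is the bookkeeping: one must verify that each obstruction is a genuine cycle and that corrections can be arranged to lie in $\hhcn {\o A} -$ so that strict unitality is preserved at every stage. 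This last point is precisely where the hypothesis of a \emph{split} (not merely strict) unit is used, as one needs the splitting $A \to k$ to project corrections into the reduced part.
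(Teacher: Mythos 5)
Your proposal follows essentially the same route as the paper: inductively kill obstructions by pushing them through surjective quasi-isomorphisms $\varphi = \hhcn {\o A} {q_*}$ (whose surjectivity and acyclic kernel come from semiprojectivity of $(\Pi\o A)^{\otimes m}$ and of $G$ resp.\ $N$), compare against the known exactness of the obstruction for $p_M$, and keep all corrections in $\hhcn {\o A}{-}$ so strict unitality survives via the split unit. One small gloss worth tightening: merely lifting $h$ through $q_*$ to $\tilde h$ gives $\dhom\tilde h - o_n \in \ker q_*$ rather than $\dhom\tilde h = o_n$, so you must additionally find a primitive of $\dhom\tilde h - o_n$ inside the acyclic complex $\ker q_*$ and subtract it, both to kill the obstruction and to preserve the commutativity making $q$ strict -- exactly the two-step correction the paper spells out.
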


\begin{cor}
Let $B$ be an associative $k$-algebra, and $\pi: (A, d_{A}) \to B$
 a $k$-projective resolution such that $A$ has a split
element $1 \in A_{0}$ with $\pi(1) = 1 \in B.$ Let $\nu_{A}$ be a
strictly unital \Ai-structure on $A$ such that $s\pi s^{-1}$ is a
strict morphism.  (Such $\nu_{A}$ exists by
\ref{cor:transfer-for-classical-algs}.) Let $M$ be a $B$-module and $G
\xra{\simeq} M$ a $k$-projective resolution. Then $G$ has a strictly
unital \Ai-module structure over $(A, 1, \nu_{A}),$ and this
is unique up to strictly unital homotopy.
\end{cor}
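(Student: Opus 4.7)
The plan is to imitate the proof of Corollary~\ref{cor:transfer-for-classical-algs}, substituting Theorem~\ref{thm:transfer-of-mod-strs} for Theorem~\ref{thm:main-transfer-result}. First I view $M$ as a strictly unital \Ai-module over $(A,1,\nu_A)$ by pulling back the ordinary unital $B$-module structure along the strict morphism $q = s\pi s^{-1}$. The resulting $p_M \in \hhc A {\End M}_{0}$ has $p_M^0 = 0$, $p_M^1$ equal to the $A$-action on $M$ induced by $\pi$, and all higher components zero; strict unitality is inherited from the unital $B$-action on $M$.

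Next, since $G$ is a complex of projective $k$-modules concentrated in nonnegative degrees, Remark~\ref{chk:semiprojs} shows $G$ is semiprojective, and the resolution $G \to M$ may be assumed surjective. Theorem~\ref{thm:transfer-of-mod-strs}(1) then produces the desired strictly unital \Ai-module structure $p_G$ on $G$ such that the resolution map is a strict morphism of \Ai-modules.

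For uniqueness up to strictly unital homotopy, I appeal to Theorem~\ref{thm:transfer-of-mod-strs}(3), which reduces the claim to the vanishing
\begin{equation*}
H_{1}\bigl(\hhcn{\o A}{\Hom{}{G}{G}}, \dab[\overbar{A}\oHom]\bigr) = 0 \quad \text{for all } n \geq 1.
\end{equation*}
Since $A$ is a $k$-projective resolution, $A$ is semiprojective, and the split unit makes $\o A$ semiprojective as well. Semiprojectivity of $G$ means the quasi-isomorphism $G \to M$ induces a quasi-isomorphism $\Hom{}{G}{G} \to \Hom{}{G}{M}$, and semiprojectivity of $\o A$ means $\Hom{}{(\Pi\o A)^{\otimes n}}{-}$ preserves it. Thus the problem reduces to showing $H_{1}(\hhcn{\o A}{\Hom{}{G}{M}}, \dab[\overbar{A}\oHom]) = 0$ for $n \geq 1$.

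The main (and essentially only nontrivial) point is a degree count: $(\Pi\o A)^{\otimes n}$ is supported in degrees $\geq n$, while $\Pi\Hom{}{G}{M}$ is supported in degrees $\leq 1$, since $G$ lies in nonnegative degrees and $M$ is concentrated in degree zero. Any degree $1$ map between them must vanish whenever $n + 1 > 1$, i.e., for all $n \geq 1$, so $\hhcn{\o A}{\Hom{}{G}{M}}_{1} = 0$ and the required $H_{1}$ vanishes on the nose. With this in hand, applying Theorem~\ref{thm:transfer-of-mod-strs}(2) and (3) in both directions, exactly as in Corollary~\ref{cor:transfer-for-classical-algs}, produces strictly unital morphisms between the two transferred module structures whose composites are strictly unitally homotopic to the identity, giving the desired homotopy equivalence.
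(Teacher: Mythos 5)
Your proof is correct and follows what the paper intends: the paper gives no proof for this corollary, saying only that it is ``similar to the proof of Corollary \ref{cor:transfer-for-classical-algs},'' and your argument is exactly the right adaptation, using Theorem \ref{thm:transfer-of-mod-strs} in place of Theorem \ref{thm:main-transfer-result}. The one genuinely nontrivial step — verifying the hypothesis of Theorem \ref{thm:transfer-of-mod-strs}(3) — is handled correctly: $(\Pi\o A)^{\otimes n}$ is semiprojective and lives in degrees $\geq n$, $\Pi\Hom{}{G}{M}$ lives in degrees $\leq 1$, so $\hhcn{\o A}{\Hom{}{G}{M}}_{1}=0$ for $n\geq 1$, and since $q_*:\Hom{}{G}{G}\to\Hom{}{G}{M}$ and then $\hhcn{\o A}{q_*}$ are surjective quasi-isomorphisms (by semiprojectivity of $G$ and of $(\Pi\o A)^{\otimes n}$ respectively), the required $H_1$ vanishing transports back to $\hhcn{\o A}{\Hom{}{G}{G}}$.

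Two small remarks, neither a gap. First, the module-level lifting step is actually slightly simpler than its algebra analogue: Theorem \ref{thm:transfer-of-mod-strs}(2) does not require a pre-chosen lift respecting a unit (there is no analogue of the $\delta^1[1]=[1]$ hypothesis of Theorem \ref{thm:main-transfer-result}(2)), so you can invoke the ``in particular'' clause of part (3) directly once the $H_1$-vanishing is established, without spelling out the back-and-forth of parts (2) and (3). Second, when you say semiprojectivity of $\o A$ means $\Hom{}{(\Pi\o A)^{\otimes n}}{-}$ ``preserves it,'' strictly you want that it preserves surjective quasi-isomorphisms (which is the defining property), applied to the surjective quasi-isomorphism $q_*$; the surjectivity of $q_*$ comes from $G$ being semiprojective and $q$ surjective. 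These are cosmetic points; the proof is sound.
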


The proof is similar to the proof of \ref{cor:transfer-for-classical-algs}.

\section{Obstruction theory}
The main tool used in the proofs of Theorems
\ref{thm:main-transfer-result} and \ref{thm:transfer-of-mod-strs} is
obstruction theory. This is a way of extending an \A n-object (algebra,
morphism, or homotopy) to an \A {n+1}-object, towards the goal of
building an \Ai-object. This general strategy is based on the following.
\begin{lem}\label{lem:ainf-iff-analln}Let $A$ and $B$ be graded modules.
  \begin{enumerate}
  \item An element $\nu \in \hhc A A_{-1}$ is a nonunital \Ai-algebra
    structure if and only if, for all $n \geq 1$,
    $\nuln \in \hhcln A A_{-1}$ is a nonunital \A n-algebra
    structure.
    
\item An element $\alpha \in \hhc A B_{0}$ is a morphism
  of nonunital \Ai-algebras $(A, \nu_{A}) \to (B, \nu_{B}),$ if and only if, for all
  $n \geq 1$, $\aln \in \hhcln A B_{0}$ is
  a morphism of \A n-algebras $(A, \nuln_{A}) \to (B, \nuln_{B}).$ 

\item An element $r \in \hhc A B_{1}$
  is a homotopy between morphisms $\alpha, \beta: (A, \nu_{A})
  \to (B, \nu_{B})$ if and only if, for all
  $n \geq 1,$ $\rln \in \hhcln A B_{1}$ is a homotopy between $\aln$ and $\bln$.
    \end{enumerate}
\end{lem}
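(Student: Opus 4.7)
The plan is to observe that the three products $\circ$, $\ast$, and $\hsp$ of Definition~\ref{defn-of-three-prods-on-hhc} are all \emph{truncation-compatible}: the $l$-th tensor-homogeneous component of any such product depends only on the components of tensor degree at most $l$ of its inputs. Granting this, all three parts of the lemma reduce to unpacking the definitions.

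For part (1), inspect the formula
\[
(\mu \circ \nu)^{l} = \sum_{\substack{1 \leq i \leq l\\0 \leq j \leq i-1}} \mu^{i}(1^{\otimes j} \otimes \nu^{l - i + 1} \otimes 1^{\otimes i - j - 1}).
\]
Both indices $i$ and $l - i + 1$ lie in $\{1, \ldots, l\}$, so $(\nu \circ \nu)^{\leq l}$ depends only on $\nu^{\leq l} = \nuln[l]$; equivalently, $\piln[l]{\nu \circ \nu} = \piln[l]{\nuln[l] \circ \nuln[l]}$. Hence $\nu \circ \nu = 0$ is equivalent to $(\nuln \circ \nuln)^{\leq n} = 0$ for every $n \geq 1$, which is precisely the claim.

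Parts (2) and (3) proceed identically once the truncation-compatibility of $\ast$ and $\hsp$ is recorded. In the formula for $\nu_B \ast \alpha$, the constraint $i_1 + \cdots + i_j = l$ with $j \geq 1$ forces $j \leq l$ and every $i_k \leq l$, so $(\nu_B \ast \alpha)^{\leq l}$ depends only on $\nu_B^{\leq l}$ and $\alpha^{\leq l}$. Combined with the analogous statement for $\circ$, the defining equation $\piln[l]{\nu_B \ast \alpha - \alpha \circ \nu_A} = 0$ depends only on the $\leq l$-truncations of its inputs, so it holds iff the \A n-morphism equation holds after truncation for all $n$. The same indexing argument applied to
\[
(\nu \hsp r)^{l} = \sum_{\substack{1 \leq i \leq l,\ 1 \leq k \leq i\\j_{1} + \cdots + j_{i} = l}} \nu^{i}(\alpha^{j_1} \otimes \cdots \otimes \alpha^{j_{k-1}} \otimes r^{j_k} \otimes \beta^{j_{k+1}} \otimes \cdots \otimes \beta^{j_i})
\]
gives the analogue for homotopies, proving (3).

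There is no real mathematical obstacle; the work is purely combinatorial bookkeeping. The only point requiring care is the consistent verification that the projections $\piln{-}$ commute with each of the three products in the sense above, so that the \A n-equations are exactly the $\leq n$-truncations of the \Ai-equations. Once this is stated cleanly, each equivalence is tautological in the forward and backward directions.
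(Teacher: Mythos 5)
Your proof is correct and takes essentially the same approach as the paper: the paper's proof of part (1) begins precisely with the observation $(\nu \circ \nu)^{\sleq n} = (\nuln \circ \nuln)^{\sleq n}$, which is your ``truncation-compatibility'' of $\circ$, and then declares parts (2) and (3) analogous. You merely spell out the index bookkeeping for $\ast$ and $\hsp$ in a bit more detail.
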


\begin{proof}
Note first that $\left (\nu \circ \nu \right )^{\sleq n} = \left (\nuln \circ
  \nuln \right )^{\sleq n}.$ Thus, if $\nu \circ \nu = 0,$ then 
$\left (\nu \circ \nu  \right )^{\sleq n} = \left (\nuln \circ \nuln
\right )^{\sleq n} = 0$ for all $n \geq 1,$ so
$\nuln$ is an \A n-algebra structure for all $n \geq 1$. Conversely, if $\nuln$ is an \A
n-algebra for all $n \geq 1$, then $\left (\nu \circ \nu \right
)^{\sleq n} = 0$  for
all $n \geq 1$, and so $\nu \circ \nu = 0.$  This proves part 1, and the other parts are proved in an analogous way. 
\end{proof}

To pass from an \A n-object to \A {n+1}-object requires one to show
that a certain cycle, called the obstruction, is a boundary. The complex where this occurs
is the following.
\begin{defn}\label{defn:diff-dab}
  Let $\nu_{A}^{1}$ and $\nu_{B}^{1}$ be \A 1-algebra structures on
  $A$ and $B$ (i.e., $(\Pi A, \nu^{1}_{A})$ and $(\Pi B, \nu^{1}_{B})$
  are complexes). For any $n \in \bN \cup \{ \infty \},$ we consider
  the complex $(\hhcn [n+1] A B, \dab),$ where
$\dab$ is the Hom-complex differential
      $\dhom$ on $\hhcn [n+1] A B = \Hom
    {} {(\Pi A)^{\otimes n}} {\Pi B}$ between the complexes $((\Pi
    A)^{\otimes n}, \delta_{\otimes} = \sum_{j} 1^{\otimes j}
    \otimes \nu^{1}_{A}\otimes 1^{n - j - 1})$ and $(\Pi B,
    \nu^{1}_{B})).$ Note that $\dab(\alpha) = \nu^{1}_{B} \alpha - (-1)^{|\alpha|} \alpha \circ
      \nu_{A}^{1}.$
  \end{defn}

\begin{defn} Let $A$ and $B$ be graded modules.
  \begin{enumerate}
  \item If $(A, \nuln)$ is an \A n-algebra, an element
    $\nu^{n+1} \in \hhcn [n+1] {A} A_{-1}$ \emph{extends $\nuln$} if
    $(A,\nuln [n+1] = \nuln + \nu^{n+1})$ is an \A {n+1}-algebra. The
    \emph{obstruction (to extending $\nuln$}) is
$$\obs {\nuln} = -\left (\nuln \circ \nuln \right )^{\sleq n+1} \in
\hhcn [n+1] {A} A_{-2}.$$

\item   If $(A, \nuln[n+1]_{A})$ and $(B, \nuln[n+1]_{B})$ are
  nonunital \A
  {n+1}-algebras, and $\aln \in \hhcln A B_{0}$ is a morphism of
 nonunital \A n-algebras $(A,
 \nuln_{A}) \to (B, \nuln_{B})$, an element $\alpha^{n+1} \in \hhcn [n+1] {A} B_{0}$
 \emph{extends $\aln$} if $\aln [n+1]$ is a morphism of \A n-algebras $(A, \nuln [n+1]_{A}) \to (B, \nuln
 [n+1]_{B})$. The \emph{obstruction} (to extending $\aln$ to a
 morphism of \A {n+1}-algebras) is
 \[\obs {\aln} = -\piln [n+1] {\nuln [n+1]_{B}* \aln - \aln \circ
     \nuln [n+1]_{A}} \in \hhcn [n+1] {A} B_{-1}.\]
 
\item  If $\aln [n+1], \bln [n+1]: (A, \nuln [n+1]_{A}) \to (B, \nuln
 [n+1]_{B})$ are morphisms of nonunital \A {n+1}-algebras, and $\rln \in \hhcln A
 B_{1}$ a homotopy between $\aln$ and $\bln,$ an element $r^{n+1} \in \hhcn [n+1] A B_{1}$
 \emph{extends $\rln$} if $\rln [n+1]$ is a homotopy between $\aln
 [n+1]$ and $\bln [n+1].$ The \emph{obstruction}
 (to extending $\rln$) is 
\[\obs{\rln} = \alpha^{n+1} - \beta^{n+1} -
 \piln [n+1]{\nuln[n+1]_{B} \hsp \rln - \rln \circ \nuln[n+1]_{A}} \in
 \hhcn [n+1] A B_{0}.\] 
\end{enumerate}
\end{defn}

The following is stated in \cite[B.1]{LH} and a proof of the first
part is given. We give full proofs of all three parts below for the ease of
the reader, because they are essential to what follows, and because
\loccit implicitly assumes that $k$ is a field (rather that every
module is semisimple), though this hypothesis is not used there. Our
proofs are based on the proof of part 1 given in \loccit

\begin{prop}\label{prop:obs-are-cycles-for-algebras} Let $A$ and $B$
  be graded modules.
  \begin{enumerate}
  \item If $(A, \nuln)$ is a nonunital \A n-algebra, then the obstruction
    $\obs{\nuln}$ is a cycle in ($\hhcn [n+1] {A} A, \dab [AA]$). An element $\nu^{n+1}$ in
    $\hhcn [n+1] {A} A_{-1}$ extends $\nuln$ if and only if
    $ \dab [AA](\nu^{n+1}) = \obs {\nuln}.$
    
\item \label{prop:obs-cycles-morphisms}
  If $(A, \nuln[n+1]_{A})$ and $(B, \nuln[n+1]_{B})$ are
  nonunital \A
  {n+1}-algebras, and $\aln \in \hhcln A B_{0}$ is a morphism of
 nonunital \A n-algebras $(A,
 \nuln_{A}) \to (B, \nuln_{B})$, then the obstruction $\obs {\aln}$ is
 a cycle in $(\hhcn [n+1] {A} B, \dab)$. An element
 $\alpha^{n+1} \in \hhcn [n+1] {A} B_{0}$ extends $\aln$ if and
 only if $\dab(\alpha^{n+1}) = \obs{\aln}.$

 \item   If $\aln [n+1], \bln [n+1]: (A, \nuln [n+1]_{A}) \to (B, \nuln
 [n+1]_{B})$ are morphisms of nonunital \A {n+1}-algebras, and $\rln \in \hhcln A
 B_{1}$ is a homotopy between $\aln$ and $\bln,$ then the obstruction
 $\obs{\rln}$ is a cycle in $(\hhcn [n+1] A B, \dab).$ An
 element $r^{n+1}$ extends $\rln$ if and only if $\dab(r^{n+1}) = \obs{\rln}.$
  \end{enumerate}
\end{prop}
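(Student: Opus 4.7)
The plan is to verify all three parts by direct computation, organized around ``associativity''-type identities satisfied by the Gerstenhaber, $*$-, and homotopy $*$-products. These identities have a conceptual origin: via the dictionary between elements of $\hhc{-}{-}$ and coderivations (respectively coalgebra morphisms) of $T^{c}(\Pi A)$ and $T^{c}(\Pi B)$, they become trivialities about strict composition. I will treat them as black boxes and handle the sign bookkeeping directly, following the pattern established for part (1) in \LH.

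For part (1) the key input is the identity $\mu \circ (\mu \circ \mu) = (\mu \circ \mu) \circ \mu$ valid for any $\mu \in \hhc A A_{-1}$ (via the coderivation dictionary, this is the tautology $D \cdot D^{2} = D^{2} \cdot D$, since the supercommutator $[D,D^{2}]$ vanishes tautologically). Apply this with $\mu := \mu^{\leq n}$: because $\mu^{\leq n}$ is an $A_{n}$-structure, $(\mu \circ \mu)^{\leq n} = 0$, so the only contribution in tensor degree $n+1$ on each side comes from pairing $\omega := (\mu \circ \mu)^{n+1}$ with the tensor-degree-$1$ piece $\mu^{1}$. Equating the two sides yields $\mu^{1}\omega = \omega \circ \mu^{1}$, i.e.\ $\dab[AA](-\omega)=0$, which is the cycle condition on $\obs{\mu^{\leq n}}$. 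For the extension condition, expand $(\mu^{\leq n+1} \circ \mu^{\leq n+1})^{\leq n+1}$ with $\mu^{\leq n+1} = \mu^{\leq n} + \nu^{n+1}$: the quadratic term $\nu^{n+1} \circ \nu^{n+1}$ has tensor degree $\geq 2n+1 > n+1$ and drops out, while the two cross terms collapse, by the same tensor-degree bookkeeping, to exactly $\dab[AA](\nu^{n+1})$. The $A_{n+1}$-relation thus reads $\dab[AA](\nu^{n+1}) = \obs{\mu^{\leq n}}$.

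Parts (2) and (3) follow the same template with additional algebraic input. For (2) the inputs are the compatibilities $\nu_{B} * (\nu_{B} * \alpha) = (\nu_{B} \circ \nu_{B}) * \alpha$, $(\alpha \circ \nu_{A}) \circ \nu_{A} = \alpha \circ (\nu_{A} \circ \nu_{A})$, and $\nu_{B} * (\alpha \circ \nu_{A}) = (\nu_{B} * \alpha) \circ \nu_{A}$, all trivialities after translating $\alpha$ to the associated coalgebra map $T^{c}(\Pi A) \to T^{c}(\Pi B)$. Applied with $\nu_{A}^{\leq n+1}, \nu_{B}^{\leq n+1}, \alpha^{\leq n}$ and projected to tensor degree $\leq n+1$, the vanishing of $(\nu_{A} \circ \nu_{A})^{\leq n+1}$ and $(\nu_{B} \circ \nu_{B})^{\leq n+1}$ together with the $A_{n}$-morphism equation for $\alpha^{\leq n}$ collapses the identity to $\dab[AB](\obs{\alpha^{\leq n}}) = 0$. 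The extension condition drops out by isolating the linear-in-$\alpha^{n+1}$ part of $\pi^{\leq n+1}(\nu_{B}^{\leq n+1} * \alpha^{\leq n+1} - \alpha^{\leq n+1} \circ \nu_{A}^{\leq n+1})$; tensor-degree constraints force the other $\alpha$-factors and the other $\nu_{A}, \nu_{B}$ components to land in tensor degree $1$, and what remains is exactly $\dab[AB](\alpha^{n+1})$. Part (3) uses one further compatibility between $\hsp$ and the Gerstenhaber/$*$ products (encoding the fact that $r$ interpolates $\alpha$ and $\beta$); combined with the obstruction identities $\dab[AB](\alpha^{n+1}) = \obs{\alpha^{\leq n}}$ and $\dab[AB](\beta^{n+1}) = \obs{\beta^{\leq n}}$ from (2), it gives $\dab[AB](\obs{r^{\leq n}})=0$, and the extension condition falls out analogously.

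The main obstacle is sign bookkeeping, which is why \LH writes out (1) in full; (2) and (3) introduce no new conceptual difficulty but do require tracking Koszul signs for the $*$- and $\hsp$-products. Where bookkeeping threatens to get unwieldy, the cleanest tactic is to translate to strict composition in $\Coder T^{c}(\Pi A)$ and $\oHom(T^{c}(\Pi A), T^{c}(\Pi B))$ and argue there.
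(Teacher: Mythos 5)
Your overall strategy---convert the Gerstenhaber, $*$-, and homotopy $*$-products into compositions of honest maps via the coderivation and coalgebra-morphism dictionaries (the paper's Lemmas~\ref{lem:isom_hhc_coder} and \ref{lem:gerst-and-star-prod-in-terms-of-can-isoms}) and then reduce the cycle conditions to associativity of composition together with $d_A^2 = 0$, $d_B^2 = 0$ on the truncated tensor coalgebras---is exactly the paper's, and your treatment of part (1) is essentially the paper's argument.

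However, the compatibility identities you write for part (2) do not type-check under the paper's definitions: the $*$-product requires a degree-$0$ second argument and a degree-$(-1)$ first argument, whereas $\nu_B * \alpha$ and $\alpha \circ \nu_A$ have degree $-1$ and $\nu_B \circ \nu_B$ has degree $-2$, so neither side of $\nu_B * (\nu_B * \alpha) = (\nu_B \circ \nu_B) * \alpha$ nor of $\nu_B * (\alpha \circ \nu_A) = (\nu_B * \alpha) \circ \nu_A$ is a well-formed expression. What actually occurs in computing $\dab(\obs{\alpha^{\sleq n}})$ is pre- and post-composition by the tensor-degree-one pieces $\nu^1_A$ and $\nu^1_B$; only after replacing these by the coderivations $d_A = \Phi^{-1}(\nuln[n+1]_A)$ and $d_B = \Phi^{-1}(\nuln[n+1]_B)$---legitimate because $d_B\zeta - \zeta d_A$ is concentrated in tensor degree $\geq n+1$, with $\zeta = \Psi^{-1}(\alpha^{\sleq n+1})$---does the cancellation follow, forced by $\nuln[n+1]_B\, d_B = 0$ and $d_A^2 = 0$. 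So the plan is sound and the ``cleanest tactic'' you flag at the end would indeed recover the paper's computation, but the intermediate identities you propose for (2), and the unnamed $\hsp$ compatibility you invoke for (3), must be restated at the level of $\Psi^{-1}$, $\Phi^{-1}$, $\Phi^{\alpha,\beta}$ before they become literally true; as written they are not identities the paper's products can express.
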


For the proofs we will need the following technical material. If $V$
is a graded module and $n \in \bN \cup \{ \infty \}$, we set $\Tcon V
= \bigoplus_{1 \leq i \leq n} V^{\otimes i}$ to be the
truncated tensor coalgebra on $V$. This is a nonunital
graded coalgebra, with comultiplication the linear extension of
$\Delta(v_1 \otimes \ldots \otimes v_{i}) = \sum_{1 \leq j \leq i-1}
(v_1 \otimes \ldots \otimes v_{j}) \otimes (v_{j+1} \otimes \ldots
\otimes v_{i}).$ Note that $\hhcn A B = \Hom {} {\Tcon {\Pi A}} {\Pi B}.$

If $C, D$ are graded coalgebras and $\alpha, \beta: C \to D$ graded
coalgebra morphisms, an \emph{$(\alpha,\beta)$-coderivation} is a degree $-1$ map $r: C \to D$ satisfying
$\Delta_{D} r - (r \otimes \alpha + r \otimes \beta) \Delta_{C} = 0.$ We write
  $\Coderab(C,D)$ for the set of $(\alpha,\beta)$-coderivations. It will often
  be the case that $\alpha = 1_{C} = \beta;$ a \emph{coderivation} is a
  $(1_{C},1_{C})$-coderivation, and we will write $\Coder(C,C)$ for the set of such.

\begin{lem}\label{lem:isom_hhc_coder}
Let $A$ and $B$ be graded modules and fix $n \in \bN \cup \{ \infty \}.$
  \begin{enumerate}
          \item The canonical projection $\pi_1: \Tcon {\Pi B} \to \Pi B$
    induces an isomorphism,
    \[ \Psi_{n} = (\pi_1)_*: \Hom {\Coalgk} {\Tcon {\Pi A}} {\Tcon {\Pi B}}
      \xra{\cong} \Hom {} {\Tcon {\Pi A}} {\Pi B}_{0} = \hhcln A B_{0}.\] The inverse applied to $\alpha =(\alpha^l) \in \hhcln A B_{0}$ is
    given by:
    \[ \pi_j \Psi^{-1}_{n}(\alpha)|_{(\Pi A)^{\otimes k}} = \sum_{i_1 + \ldots +
        i_j = k} \alpha^{i_1}\otimes \ldots \otimes \alpha^{i_{j}}.\]
  \item Let $\Psi_{n}^{-1}(\alpha),\Psi_{n}^{-1}(\beta): \Tcon {\Pi A} \to
    \Tcon {\Pi B}$ be two
    coalgebra morphisms, with $\alpha, \beta \in \hhcln A B_{0}.$ The canonical projection $\pi_1: \Tcon {\Pi B} \to \Pi B$
    induces an isomorphism,
    \[ \Phi_{n}^{\alpha,\beta} = (\pi_1)_*: {\Coderab}( \Tcon {\Pi A},
      \Tcon {\Pi B}) \xra{\cong} \Hom {} {\Tcon {\Pi A}} {\Pi B} =
      \hhcln
      A B.\]
    The inverse applied to
    $r = (r^{l}) \in \hhcln A B$ is given by
    \begin{align*}
      \pi_{j} (\Phi^{\alpha,\beta}_{n})^{-1}(r)|_{(\Pi A)^{\otimes k}}
      &= \sum_{\substack{i_{1}+\ldots+i_{j} = k\\ 1 \leq m \leq j}} \alpha^{i_{1}}
        \otimes \ldots \otimes \alpha^{i_{m-1}} \otimes r^{i_{m}} \otimes
        \beta^{i_{m+1}} \otimes \ldots \otimes \beta^{i_{j}}.
                                        \end{align*}
        \end{enumerate}
      \end{lem}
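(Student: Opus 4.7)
The plan is to prove both parts by the same three-step template applied to the proposed inverse formula: (i) verify the formula lands in the correct target class (coalgebra morphisms in part 1, $(\alpha,\beta)$-coderivations in part 2); (ii) check that post-composition with $\pi_1$ returns the original $\alpha$ (or $r$), so the formula is a right inverse of $(\pi_1)_*$; (iii) show injectivity of $(\pi_1)_*$, which combined with (ii) forces the formula to be a two-sided inverse.

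For part 1, I would let $F := \Psi_n^{-1}(\alpha)$ be defined by the stated sum. Step (ii) is immediate: taking $j=1$ collapses the sum to the single term $\alpha^k$, so $\pi_1 F = \alpha$. For step (i) the verification $\Delta F = (F \otimes F)\Delta$ is done componentwise by projecting both sides to $(\Pi B)^{\otimes j} \otimes (\Pi B)^{\otimes l}$ and restricting to $(\Pi A)^{\otimes k}$: on the right, the deconcatenation coproduct on the source splits $(\Pi A)^{\otimes k}$ at each interior index $k = k_1 + k_2$, and applying $F$ to each half yields a double sum over compositions of $k_1$ and $k_2$; on the left, applying $F$ first and then deconcatenating the target between blocks $j$ and $j{+}1$ yields a sum over compositions $k = i_1 + \ldots + i_{j+l}$ cut after the $j$-th part. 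The two sums are in obvious bijection. For step (iii), given any coalgebra morphism $G$ with $\pi_1 G = \alpha$, I would apply iterated coassociativity to write $\pi_j G = (\pi_1 G)^{\otimes j} \circ \Delta^{(j-1)}$; the explicit form of the iterated deconcatenation coproduct on $\Tcon{\Pi A}$ then reproduces exactly the defining formula for $\Psi_n^{-1}(\alpha)$, so $G = F$.

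Part 2 is analogous. Define $R := (\Phi_n^{\alpha,\beta})^{-1}(r)$ by the given formula; once more $\pi_1 R = r$ by the $j=1$ term. For step (i), I would verify the coderivation identity $\Delta R = (R \otimes \alpha + R \otimes \beta)\Delta$ under each projection $(\pi_j \otimes \pi_l)$: every term on either side is indexed by the slot $m$ carrying the distinguished $r$-factor together with an interior cut, and the two indexings are in bijection exactly as in part 1. For step (iii), if $R'$ is an $(\alpha,\beta)$-coderivation with $\pi_1 R' = 0$, iterating the coderivation identity expresses each $\pi_j R'|_{(\Pi A)^{\otimes k}}$ as a sum of tensors every one of which contains a $\pi_1 R'$ factor, hence $R' = 0$.

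The main obstacle is not conceptual but notational: both verifications are driven by the same combinatorial bijection between (interior cut of $(\Pi A)^{\otimes k}$ followed by composition of each half) and (composition of $k$ followed by cut between two consecutive blocks). The iterated tensor products and multi-index sums make the formulas bulky, so the work lies in careful bookkeeping rather than in any single difficult step; this is likely why \cite{LH} proves only part 1 in detail.
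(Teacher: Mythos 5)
Your proof sketch is correct and is the standard argument. The paper itself does not give a proof: it cites Prout\'e \cite[2.19, 2.16]{MR2844537} for part 1 and for the $\alpha=\beta=\mathrm{id}$ case of part 2, remarking that the latter proof "is easily modified" for general $\alpha,\beta$ --- and your outline is precisely that modification carried out. The crux of your step (iii), that for any graded coalgebra morphism $G:\Tcon{\Pi A}\to\Tcon{\Pi B}$ one has $\pi_j G = (\pi_1 G)^{\otimes j}\circ\Delta^{(j-1)}$ (and its coderivation analogue, in which the distinguished factor $\pi_1 R'$ appears in one of $j$ slots), is the right observation; it simultaneously gives injectivity and, once one checks that the proposed formula is actually a coalgebra morphism (resp.\ coderivation), forces it to be the two-sided inverse. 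Two small remarks. First, the paper's displayed defining identity for an $(\alpha,\beta)$-coderivation, $\Delta_D r - (r\otimes\alpha + r\otimes\beta)\Delta_C = 0$, contains a typo --- it should read $(\alpha\otimes r + r\otimes\beta)\Delta_C$, consistent with the inverse formula in the lemma --- and your sketch copies that typo verbatim even though the surrounding discussion (placing $r$ in slot $m$ with $\alpha$'s to its left and $\beta$'s to its right) shows you understood the intended meaning. Second, your closing aside attributes the cited proof to \cite{LH}, but the relevant citation here is \cite{MR2844537}; \cite{LH} is cited elsewhere in this section for the obstruction-theoretic material.
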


      \begin{proof}
        Part 1 is \cite[2.19]{MR2844537}, and Part 2, in case both
        morphisms are the identity, is \cite[2.16]{MR2844537}. The
        proof given in \loccit is easily modified to prove part 2 for
        arbitrary $\alpha$ and $\beta$.
      \end{proof}

The isomorphisms above are related
to the products defined in \ref{defn-of-three-prods-on-hhc} as
follows. (We write
$\Phi_{n}$ for $\Phi_{n}^{1_{C}, 1_{C}}$ below and in the sequel.)

\begin{lem}\label{lem:gerst-and-star-prod-in-terms-of-can-isoms}
Let $A$ and $B$ be graded modules and fix $n \in \bN \cup \{ \infty \}.$
\begin{enumerate}
\item For $\mu \in \hhcln A B$ and $\nu \in \hhcln A A,$ we have 
$\left (\mu \circ \nu \right )^{\sleq n} = \mu \Phi_{n}^{-1}(\nu).$
\item For $\nu \in \hhcln B B$ and $\alpha \in \hhcln A B_{0}$, we
  have $\left (
\nu * \alpha \right )^{\sleq n} = \nu \Psi_{n}^{-1}(\alpha).$
\item For $\nu \in \hhcln B B_{-1}, r \in \hhcln A B_{1},$ and
  $\alpha, \beta \in \hhcln A B_{0},$ we have $\nu \hsp r = \nu (\Phi_{n}^{\alpha,\beta})^{-1}(r).$
\end{enumerate}
\end{lem}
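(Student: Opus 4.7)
The plan is to prove each of the three parts by direct unpacking, since the content is entirely combinatorial once the explicit formulas for the inverses of $\Psi_{n}$ and $\Phi_{n}^{\alpha,\beta}$ from Lemma \ref{lem:isom_hhc_coder} are at hand. In each part the right-hand side is a composition $\mu \circ F$ or $\nu \circ F$ where $F : \Tcon{\Pi A} \to \Tcon{\Pi B}$ is a coalgebra morphism or a coderivation, while the codomain of $\mu, \nu$ is the tensor-degree-one piece $\Pi B$. Therefore only the projection of $F$ onto each $(\Pi B)^{\otimes j}$, composed with the component $\mu^{j}$ (or $\nu^{j}$), contributes, and we can write each side as a map from $\Tcon{\Pi A}$ to $\Pi B$ by specifying its restriction to $(\Pi A)^{\otimes l}$ for every $1 \leq l \leq n$.

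For part (1), I would restrict to $(\Pi A)^{\otimes l}$ and expand using the formula from Lemma \ref{lem:isom_hhc_coder}(2) with $\alpha = \beta = 1_{\Tcon{\Pi A}}$; since $1^{i}$ has only a tensor-degree-one component equal to $\mathrm{id}_{\Pi A}$, the indices $i_{1}, \ldots, i_{m-1}, i_{m+1}, \ldots, i_{j}$ are forced to equal $1$ and $i_{m} = l - j + 1$. After reindexing $i := j$, $j := m-1$, the sum becomes exactly $(\mu \circ \nu)^{l}$ as defined in \ref{defn-of-three-prods-on-hhc}(1). Part (2) is the most straightforward: substituting Lemma \ref{lem:isom_hhc_coder}(1) directly produces the defining sum for $(\nu * \alpha)^{l}$. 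For part (3), apply Lemma \ref{lem:isom_hhc_coder}(2) with general $\alpha, \beta$, restrict to $(\Pi A)^{\otimes l}$, postcompose with $\nu$, and then match the resulting sum term-by-term with the definition of $\nu \hsp r$; the indexing already matches up to the renaming $(l, i, k, j_{p}) \leftrightarrow (k, j, m, i_{p})$.

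I do not expect any genuine obstacle: there is no sign subtlety here because each $\alpha^{i}$, $\beta^{i}$, $r^{i}$, $\nu^{i}$, $\mu^{i}$ is placed in its natural tensor slot in the formulas of Lemma \ref{lem:isom_hhc_coder}, and the products in \ref{defn-of-three-prods-on-hhc} are defined in precisely the same order. The only thing to be careful about is that the composition $\mu \circ F$, where $F$ has image in $\Tcon{\Pi B}$, means $\sum_{j} \mu^{j} \circ \pi_{j} F$, so one must check that the sums over $j$ on the two sides of each equation agree; this is immediate once the formulas are written out. The lemma can thus be proved in a compact uniform way by treating the three parts in parallel and pointing out that they follow by comparing the restrictions to $(\Pi A)^{\otimes l}$ for each $l \leq n$.
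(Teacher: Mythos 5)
Your proposal is correct and matches the paper's own argument: both proceed by composing the explicit formulas for $\Psi_{n}^{-1}$ and $(\Phi_{n}^{\alpha,\beta})^{-1}$ from Lemma~\ref{lem:isom_hhc_coder} with $\mu$ (or $\nu$) in tensor degree $l$ and reindexing to recover the defining sums in Definition~\ref{defn-of-three-prods-on-hhc}. The paper spells out only part (1) and dismisses the rest as analogous, so your explicit treatment of the forced indices $i_p = 1$ in the coderivation formula (and of the index renaming in part (3)) is a slightly fuller write-up of the same computation, not a different route.
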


\begin{proof}
Note that $\mu \Phi_{n}^{-1}(\nu)$ is a morphism $\Tcon {\Pi
A} \to \Pi B,$ i.e., is an element of $\hhcln A B.$ By the definition
of $\Phi^{-1},$ this is given in tensor degree $1 \leq j \leq n$ by
$ \sum_{\substack{1 \leq i \leq k\\0 \leq j
    \leq i-1}}  \mu^{i}
(1^{\otimes j} \otimes \nu^{k-i+1} \otimes 1^{\otimes i - j - 1}),$ which agrees with $\mu \circ \nu$ in
tensor degree $j$. This proves part 1 and
the others are proved in an analogous way.
\end{proof}

\begin{proof}[Proof of Proposition \ref{prop:obs-are-cycles-for-algebras}]
We first prove part 1. Let $\nu^{n+1} \in \hhcn [n+1] A A_{-1}$ be an
arbitrary element, and set $\nuln [n+1] = \nuln + \nu^{n+1}.$ We
have,
\begin{equation}\label{eq:obs}
  \begin{aligned}
    \left (\nuln [n+1] \circ \nuln [n+1] \right )^{\sleq n+1} &= 
    \left (\nuln \circ \nuln \right )^{\sleq n+1} + \nu^{1} \circ \nu^{n+1} + \nu^{n+1}\circ
    \nu^{1}\\
    &= -\obs{\nuln} + \dab [AA](\nu^{n+1}).
  \end{aligned}
\end{equation}
Thus to show $\obs{\nuln}$ is a
cycle, it is enough to show $\left(\nuln [n+1] \circ \nuln [n+1]\right)^{\sleq n+1}$
is a cycle.

Since $\nu^{1}$ has tensor degree 1, $\piln [n+1] {\nuln [n+1]
\circ \nuln [n+1]} \circ \nu^{1} =  \piln [n+1]{(\nuln [n+1] \circ \nuln [n+1]) \circ
\nu^{1}}.$ Since $\nuln [n+1] \circ \nuln [n+1]$ is concentrated in tensor
degrees at least $n+1$ (because $\nuln$ is an \A n-algebra structure), $\piln [n+1]{(\nuln [n+1] \circ \nuln [n+1]) \circ
\nu^{1}} = \piln [n+1]{(\nuln [n+1] \circ \nuln [n+1]) \circ 
\nuln [n+1]}.$ Analogously, $\nu^{1} \circ \piln [n+1] {\nuln [n+1] \circ \nuln [n+1]} =
\piln [n+1] { \nuln [n+1] \circ (\nuln [n+1] \circ \nuln [n+1])}$. Thus
\begin{gather*}
  \dab [AA]\left (\piln [n+1]{\nuln [n+1] \circ \nuln [n+1]}\right )  =\\
  \piln [n+1] {\nuln [n+1] \circ (\nuln [n+1] \circ \nuln [n+1]) - (\nuln [n+1] \circ \nuln [n+1]) \circ 
\nuln [n+1]} = 0,
\end{gather*}
since $f \circ
(f \circ f)- (f \circ f) \circ f = 0$  for any element of $f \in \hhc
A A_{-1}$ by \cite[\S 2]{MR0161898}, and thus $\obs{\nuln}$ is a
cycle. By definition, $\nu^{n+1}$ extends $\nuln$ if and only if $\piln
[n+1]{\nuln [n+1] \circ \nuln [n+1]} = 0,$
and by \eqref{eq:obs} this is equivalent to $\dab [AA] (\nu^{n+1})
= \obs{\nuln}.$

To prove part 2, let $\alpha^{n+1} \in \hhcn [n+1] A B_{0}$ be an
arbitrary element. We have
\begin{equation}
\piln [n+1] {\nuln [n+1]_B * \aln [n+1] - \aln [n+1]
\circ \nuln [n+1]_A} = -\obs{\aln} +
\dab(\alpha^{n+1}).\label{eq:obs-morphisms}
\end{equation}
Thus
to show $\obs{\aln}$ is a cycle,
it is enough to show \eqref{eq:obs-morphisms} is a cycle.
Set
\begin{align*}
d_{A} &=
\Phi^{-1}_{n+1}(\nuln [n+1]_A) \in \Coder(\Tcon [n+1]{\Pi A},
        \Tcon [n+1] {\Pi A})\\
  d_{B} &=
\Phi^{-1}_{n+1}(\nuln [n+1]_B) \in \Coder(\Tcon [n+1]{\Pi B},
          \Tcon [n+1] {\Pi B})\\
  \zeta &= \Psi^{-1}_{n+1}(\aln [n+1]):
\Tcon [n+1] {\Pi A} \to \Tcon [n+1] {\Pi B}.
\end{align*}
By Lemma
\ref{lem:gerst-and-star-prod-in-terms-of-can-isoms} we have
\begin{displaymath}
\piln [n+1] {\nuln [n+1]_B * \aln [n+1] - \aln [n+1]
\circ \nuln [n+1]_A} = \nuln [n+1]_B \zeta - \aln [n+1] d_{A},
\end{displaymath}
and thus, we aim to show $\dab(\nuln [n+1]_B \zeta - \aln [n+1] d_{A}) = 0.$

We first claim that $\nu^{1}_{B}(\nuln [n+1]_B \zeta -
\aln [n+1] d_{A}) = -\piln [n+1]{\nuln [n+1]_{B} \zeta
d_{A}}$. Indeed, because
$\nu^{1}_{B}$ has tensor degree 1, $\nu^{1}_{B}(\nuln [n+1]_B \zeta -
\aln [n+1] d_{A}) = \nu^{1}_{B}(d_{B} \zeta - \zeta d_{A})$, using the
definition of $\Phi^{-1}$ and $\Psi^{-1}$. Since $d_{B} \zeta -
\zeta d_{A}$ is concentrated in tensor degree at least $n+1$ (because
$\aln$ is a morphism of \A n-algebras), we have $\nu^{1}_{B}(d_{B} \zeta -
\zeta d_{A}) = \piln [n+1]{\nuln [n+1]_{B}(d_{B} \zeta -
  \zeta d_{A})}$, and the claim follows from
\[\nuln [n+1]_B
d_{B} = \piln [n+1]{\nuln [n+1]_{B} \circ \nuln [n+1]_{B}} = 0,\]
which holds since
$\nuln [n+1]_{B}$ is an \A {n+1}
algebra structure.

We now claim that $(\nuln [n+1]_B \zeta -
\aln [n+1] d_{A})\circ \nu^{1}_{A} = \piln [n+1]{\nuln [n+1]_{B}
\zeta d_{A}}.$ By tensor degree considerations $(\nuln [n+1]_B \zeta -
\aln [n+1] d_{A})\circ \nu^{1}_{A} = \piln [n+1]{(\nuln [n+1]_B \zeta -
\aln [n+1] d_{A})\circ \nuln [n+1]_{A}},$ and this is equal to
$\piln [n+1] {(\nuln [n+1]_B \zeta -
\aln [n+1] d_{A})d_{A}}$ by Lemma
\ref{lem:gerst-and-star-prod-in-terms-of-can-isoms}. The claim
follows since $d_{A} d_{A} = 0,$ since $\nu_{A}^{n+1}$ is an \A
{n+1}-algebra structure. Putting the two claims together, we have 
\[\dab(\nuln [n+1]_B \zeta -
\aln [n+1] d_{A}) 
= -\piln [n+1]{\nuln [n+1]_{B} \zeta
d_{A}} + \piln [n+1]{\nuln [n+1]_{B}
\zeta d_{A}} = 0.\]
This shows that $\obs{\alpha^{\leq n}}$ is a cycle. By definition, $\alpha^{n+1}$ is an extension of $\aln$ if and only if
\[\piln [n+1] {\nuln [n+1]_{B} * \aln [n+1] - \aln [n+1]
\circ \nuln [n+1]_{A}} = 0,\]
 and this is equivalent to
$\dab(\alpha^{n+1}) = \obs{\aln}$ by \eqref{eq:obs-morphisms}.

To prove part 3, let $r^{n+1} \in \hhcn [n+1] A B_{1}$ be an arbitrary
element. We have
\begin{displaymath}
 \piln [n+1]{\nuln[n+1]_{B} \hsp \rln[n+1] - \rln[n+1] \circ
   \nuln[n+1]_{A}} + \beta^{n+1} - \alpha^{n+1} = -\obs{\rln} + \dab(r^{n+1}).
\end{displaymath}
Thus, as above, we aim to show the left side of the above equation is a
cycle. Using similar techniques as in the proof of
part 2, one computes
\begin{gather*}
\nu^{1}_{B} \piln [n+1]{\nuln[n+1]_{B} \hsp \rln[n+1] - \rln[n+1] \circ
   \nuln[n+1]_{A}} = \piln [n+1]
 {-\nu^{1}_{B} \rln [n+1] \circ \nu_{A}^{n+1}}\\
 \piln [n+1]{\nuln[n+1]_{B} \hsp \rln[n+1] - \rln[n+1] \circ
   \nuln[n+1]_{A}} \circ \nu^{1}_{A} = \piln [n+1]
 {\nu^{1}_{B} \rln [n+1] \circ \nu_{A}^{n+1}}\\
 \alpha^{n+1} \circ \nu^{1}_{A} -
\nu^{1}_{B} \alpha^{n+1} = 0 \quad \quad \beta^{n+1} \circ \nu^{1}_{A} - \nu^{1}_{B} \beta^{n+1}  = 0
\end{gather*}
where the last two equalities use that $\alpha^{\leq n-1}$ and $\beta^{\leq n+1}$  are morphisms of \A
{n+1}-algebras. Combining the above four equations, we see that
\[\dab(\piln [n+1]{\nuln[n+1]_{B} \hsp \rln[n+1] - \rln[n+1] \circ
    \nuln[n+1]_{A}} + \beta^{n+1} - \alpha^{n+1}) = 0.\]
The rest of the proof is analogous to part 2.
\end{proof}

To use obstruction theory to construct strictly unital objects, we
need to assume $(A, 1)$ is a graded module with split element
(see Definition \ref{defn:split-elt}).
 It follows from Definition \ref{defn:strict-unit} that to extend a
 strictly unital \A n-algebra structure on $A$ to a strictly unital \A
 {n+1}-algebra structure, we need only an element of $\hhcn
 [n+1] {\o A} A$, not $\hhcn [n+1] A A$ (and analogously for
 morphisms and homotopies). If $\nu^{1}_{A}$ is strictly unital, i.e., $\nu^{1}_{A}[1] = 0,$
 the differential $\dab$ preserves $\hhcn [n+1] {\o A} B.$ 
 We write $\doab$ for $\dab|_{\hhcn [n+1]{\overbar A} B},$ so $(\hhcn
 [n+1] {\o A} B, \dab [\overbar{A}B])$ is a subcomplex of $(\hhcn
 [n+1] A B, \dab).$ If we
 denote by $\nu^{1}_{\overbar A}: \Pi \o A \to \Pi \o A$ the map induced
 by $\nu^{1}_{A}$, then $\doab(\alpha) = \nu^{1}_{B} \alpha - (-1)^{|\alpha|} \alpha
 \circ \nu^{1}_{\overbar A}.$ The
 following shows we can work with the complex
 $(\hhcn [n+1] {\o A} B, \doab)$ to do strictly unital obstruction theory.

\begin{lem}\label{lem:sual-obs-live-in-subcomplex}  Let $(A, 1)$ be a graded module with split element, $B$ a graded
  module with fixed element $1 \in B_{0},$ and fix $n \in \bN \cup \{
  \infty \}.$
   \begin{enumerate}
    \item  If $(A, 1, \nuln_{A} = \muln_{A} + \msu)$ is an \A {n}-algebra with
 split unit, where $\muln_{A} \in \hhcln {\o A} A,$ then the obstruction $\obs {\nuln_{A}}$ is in $\hhcn [n+1] {\o A} A \subseteq \hhcn
  [n+1] A A$.  For $\mu^{n+1}_{A} \in \hhcn [n+1] {\o A}
  A_{-1},$ $\nuln [n+1]_{A} = \muln [n+1]_{A} + \msu$ is a
  strictly unital \A {n+1}-algebra if  $\dab
  [\overbar{A}A](\mu^{n+1}_{A}) = \obs{\nuln_{A}},$ when $n \geq 2,$ or
  $\dab [\overbar{A}A](\mu^{2}_{A}) + \dab[AA](\msu^{2}) = 0$ when $n = 1.$

\item  If $(A, 1, {\nuln [n+1]_{A}})$ is an \A {n + 1}-algebra with
 split unit, $(B, \nuln [n+1]_{B})$ a strictly unital \A {n +
   1}-algebra, and $\aln = \bln +
 \gsu: (A,
 \nuln_{A}) \to (B, \nuln_{B})$ a strictly unital morphism of \A
 n-algebras, with $\bln \in \hhcln {\o A} B$, then the obstruction $\obs {\aln}$ is in $\hhcn [n+1] {\o A} B \subseteq \hhcn
 [n+1] A B$.  For $\beta^{n+1} \in \hhcn [n+1] {\o A}
  B_{0},$ $\aln [n+1] = \bln [n+1] + \gsu$ is a
  strictly unital morphism of \A {n+1}-algebras if $\doab(\beta^{n+1}) = \obs{\aln}.$
 
\item If $\aln [n+1],
  \bln[n+1]: (A, \nuln[n+1]_{A}) \to (B, \nuln[n+1]_{B})$ are strictly
  unital morphisms of strictly unital \A {n+1}-algebras, and
  $\rln \in \hhcln {\o A}B_{1}$ a strictly unital homotopy between
  $\aln$ and $\bln,$ then the obstruction $\obs{\rln}$ is in $\hhcn [n+1]
  {\o A} B.$ For $r^{n+1} \in \hhcn [n+1] {\o A}
  B_{1},$ $\rln [n+1]$ is a
  strictly unital homotopy between $\aln [n+1]$ and $\bln [n+1]$ if $\doab(r^{n+1}) = \obs{\rln}.$
 \end{enumerate} 
\end{lem}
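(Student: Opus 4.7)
My plan is to prove all three parts by the same two-step method. First, show that the obstruction cycle supplied by Proposition \ref{prop:obs-are-cycles-for-algebras}, a priori living in $\hhcn[n+1] A A$ (respectively $\hhcn[n+1] A B$), in fact lies in the subcomplex $\hhcn[n+1]{\o A} A$ (respectively $\hhcn[n+1]{\o A} B$); this is verified by checking that the obstruction vanishes on any input containing the split element $1 \in A_0$. Second, transcribe the extension criterion from Proposition \ref{prop:obs-are-cycles-for-algebras} into the subcomplex, using the splitting identities of Definition \ref{defn:split-elt} together with the vanishing $\msu^k = 0$ for $k \neq 2$ and $\gsu^k = 0$ for $k \neq 1$.

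For part 1, I write $\nuln_A = \muln_A + \msu$ with $\muln_A \in \hhcln{\o A} A$ and expand $-(\nuln_A \circ \nuln_A)^{\sleq n+1} = \obs{\nuln_A}$ bilinearly. Evaluating the generic term $\nu^i_A(1^{\otimes j}\otimes \nu^{n+2-i}_A \otimes 1^{\otimes i-j-1})$ on an input $\xi = [a_1|\ldots|a_{p-1}|1|a_{p+1}|\ldots|a_{n+1}]$, the distinguished $1$ is fed either into the inner $\nu^{n+2-i}_A$-factor or into an outer slot of $\nu^i_A$. Strict unitality (Definition \ref{defn:strict-unit}) kills any summand in which $1$ lands in a $\mu^r$-factor or in a $\nu^r$-factor with $r \neq 2$; the remaining summands, where $1$ is absorbed by an $\msu^2$, pair up and cancel via the unit identity $\msu^2[1|a] = [a] = (-1)^{|a|}\msu^2[a|1]$ together with the inductive hypothesis $(\nuln_A \circ \nuln_A)^{\sleq n} = 0$. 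This shows $\obs{\nuln_A} \in \hhcn[n+1]{\o A} A$. The extension criterion then follows from equation \eqref{eq:obs}: for $n \geq 2$, $\msu^{n+1} = 0$ gives $\nu^{n+1} = \mu^{n+1}_A$, so the equation restricts to $\doab(\mu^{n+1}_A) = \obs{\nuln_A}$; for $n = 1$, one has $\nu^2 = \mu^2_A + \msu^2$ and $\obs{\nu^1_A} = 0$ by tensor-degree considerations, yielding the stated condition $\dab[\overbar A A](\mu^2_A) + \dab[AA](\msu^2) = 0$.

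Parts 2 and 3 follow by the same template, using the decompositions $\aln = \bln + \gsu$ and the observation that a strictly unital $\rln$ already lies in $\hhcln{\o A} B_1$. Expanding the defining formulas for $\obs{\aln}$ and $\obs{\rln}$ bilinearly, one discards summands killed by strict unitality of $\nuln[n+1]_A$, $\nuln[n+1]_B$, $\aln$, $\bln$, and $\rln$, and then checks that the surviving $\msu^2$- and $\gsu^1$-contributions cancel pairwise, exactly as in part 1. The extension criteria are then immediate from Proposition \ref{prop:obs-are-cycles-for-algebras}, since $\gsu^{n+1} = 0$ for $n \geq 1$, so the extending $\beta^{n+1}$ coincides with the extending $\alpha^{n+1}$ (and similarly for $r^{n+1}$) in tensor degree $n+1$. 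The main obstacle throughout is the bookkeeping of $\msu^2$- and $\gsu^1$-cancellations: morally it expresses that $(k, \msu, \gsu)$ is a strictly unital \Ai-subdatum, so that passing to the quotient $\o A$ is compatible with the obstruction construction, but matching signs, indices, and the tensor-degree-$(n+1)$ truncation requires careful index juggling.
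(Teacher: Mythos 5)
Your two-step template — first show the obstruction kills inputs containing the split element $1$, hence lands in the subcomplex $\hhcn [n+1] {\o A} A$ (resp.\ $\hhcn [n+1] {\o A} B$), then transcribe the extension criterion from Proposition~\ref{prop:obs-are-cycles-for-algebras} — is exactly the paper's strategy, and your treatment of the extension criteria, including the $n=1$ boundary case where $\obs{\nu^{1}_{A}} = 0$ and the condition becomes $\dab[\overbar{A}A](\mu^{2}_{A}) + \dab[AA](\msu^{2}) = 0$, matches the paper. The difference is in how the first step is executed. The paper refines the decomposition one step further, splitting $\muln_{A}$ along the \emph{codomain} $A = \o A \oplus k$ into $\omuln_{A} + h^{\sleq n}$ with $\omuln_{A} \in \hhcln{\o A}{\o A}$, $h^{\sleq n} \in \hhcln{\o A}{k}$, and then invokes two structural identities from the companion paper \cite[Lemmas 4.7.1, 4.7.2]{SUal} to reduce $\nuln_A\circ\nuln_A$ to $\omuln_{A}\circ\omuln_{A} + h^{\sleq n}\circ\omuln_{A} + \msu(h^{\sleq n}\otimes 1 + 1\otimes h^{\sleq n})$, each term of which is observed to vanish on $1$-containing inputs. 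You instead evaluate the raw bilinear expansion directly and case-analyze which slot absorbs the distinguished $1$, asserting that the survivors pair up via $\msu^{2}[1|a] = [a] = (-1)^{|a|}\msu^{2}[a|1]$. These are two organizations of the same cancellation, and your sketch is sound; but note that the pairwise cancellation you invoke is precisely the nontrivial content the paper offloads to \cite{SUal} (there are interior cancellations among the $i = n$ Gerstenhaber terms and boundary cancellations between the $i = n$ and $i = 2$ terms when $1$ sits at the ends, all sign-sensitive), so neither presentation is self-contained at that point, and your closing caveat about ``careful index juggling'' is well placed. One small quibble: your appeal to the inductive hypothesis $(\nuln_A\circ\nuln_A)^{\sleq n} = 0$ inside the cancellation argument seems misdirected — that hypothesis is what concentrates the obstruction in tensor degree exactly $n+1$, while the vanishing on $1$-containing inputs within that degree is an independent fact — but this does no harm.
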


\begin{proof}
We prove part 1. By definition, $\obs{\nuln} = \piln [n+1]
{\nuln \circ \nuln}.$ If $n= 1,$ then $\obs{\nuln} = 0 \in \hhcn [2]
{\o A} A.$ If $n \geq 2,$ we can write $\nuln = \muln + \msu,$ for
$\muln \in \hhcln {\o A} A.$ Write $\muln = \omuln + \hln \in \hhcln {\o A}
      {\o A} \oplus \hhcln {\o A} k,$ so $\nuln \circ
      \nuln = (\omln + \hln + \msu) \circ (\omln + \hln + \msu)$. It
      is clear that $\omuln \circ \hln = 0 = \hln \circ \hln,$ and $\msu \circ \msu = 0$
      since it is an \Ai-algebra structure. By \cite[Lemma
      4.7.1]{SUal}, $\omuln \circ \msu + \msu \circ \omuln = 0,$ and
      by \cite[Lemma 4.7.2]{SUal}, $\hln \circ \msu + \msu \circ \hln
      = \msu(\hln \otimes 1 + 1 \otimes \hln).$ Thus, $\nuln \circ
      \nuln = \omuln \circ \omuln + \hln \circ \omuln + \msu(\hln
      \otimes 1 + 1 \otimes \hln),$ and this vanishes on any element
      that contains $1,$ so $\obs{\nuln}$ is in $\hhcn [n+1] {\o A}
      A.$ 

Let $\mu^{n+1} \in \hhcn [n+1] {\o A}
      A_{-1}$ be an arbitrary element. Assume first that $n \geq 2.$  By
      \ref{prop:obs-are-cycles-for-algebras}.(1), $\nuln + \mu^{n+1} =
      \mu^{\leq n+1} + \msu$ is an \A {n+1}-algebra structure exactly
      when $\dab
      [AA](\mu^{n+1}) = \obs{\nuln}.$ If $n = 1,$ then  $\obs{\nuln} =
      0,$ and so by
      \ref{prop:obs-are-cycles-for-algebras} again, $\nu^{1} + \mu^{2}
      + \msu$ is an \Ai-algebra if $\dab(\mu^{2} + \msu) = 0.$ The
      other parts are proved analogously.
    \end{proof}

We now formulate the module analogues of the definitions and
results of this section. Proofs are not included, but are similar to
their algebra analogues.

\begin{lem}\label{lem:ainf-mod-iff-an-mod-all-n}
Let $(A, \nu)$ be a nonunital \Ai-algebra.
\begin{enumerate}
\item An element $p_{M} \in \hhcu A {\End M}_{0}$ is an \Ai-module
  structure over $(A, \nu)$ if and only if $p_{M}^{\leq n-1}$ is an \A
  n-module structure over $(A, \nuln [n-1])$ for all $n \geq 1.$
  
\item An element $f \in \hhcu A {\Hom {} M N}_{1}$ is a morphism of
  \Ai-modules $(M, p_{M}) \to (N, p_{N})$ if and only if $f^{\leq n-1}$ is a morphism of \A
  n-modules $(M, p_{M}^{\leq n-1}) \to (N, p_{N}^{\leq n-1})$ for all $n \geq 1.$

  \item An element $f \in \hhcu A {\Hom M
      N}_{2}$ is a homotopy between morphisms of \Ai-modules $f,g: (M, p_{M}) \to (N, p_{N})$ if and only if
    $r^{\leq n-1}$ is a homotopy between $f^{\leq
      n-1}$ and $g^{\leq n -1}$ for all $n \geq 1.$
\end{enumerate}
\end{lem}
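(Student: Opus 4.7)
The strategy is to mimic the proof of Lemma \ref{lem:ainf-iff-analln} line for line. That proof rested on a single observation: the Gerstenhaber, $*$, and homotopy $*$-products all have the property that the tensor-degree-$n$ component of a product depends only on the tensor-degree-$\leq n$ components of the inputs, so the defining equation of an \Ai-object is equivalent to the family of its truncations, and each truncation is precisely the defining equation of the corresponding \A n-object. Parts 1, 2, and 3 of the present lemma will follow the same pattern once the same truncation-compatibility is checked for the products $\circ$ and $\star$ appearing in the module equations.

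For part 1, the shortest route is to avoid a separate calculation entirely. By definition, an \Ai-module structure on $(M, \delta_M)$ is nothing but a morphism of \Ai-algebras $p_M \in \hhc A {\End M}_0$ (with the differential $\delta_M$ absorbed into the tensor-degree-zero slot), and an \A n-module structure is a morphism of \A {n-1}-algebras $A \to \End M$. Part 1 is therefore a direct transcription of Lemma \ref{lem:ainf-iff-analln}.(2) with $B = \End M$, after reindexing (the \A n-module / \A {n-1}-algebra offset is cosmetic).

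For parts 2 and 3, the defining equations
\[
p_N \star f + f^{\geq 1} \circ \nu + f \star p_M = 0, \qquad f - g = r \circ \nu + p_N \star r + r \star p_M
\]
involve the $\star$ product. From the formula $(\alpha \star \beta)^n = s\gamma(s^{-1}\otimes s^{-1}) \sum_{j=0}^{n} \alpha^j \otimes \beta^{n-j}$ one sees directly that $(\alpha \star \beta)^{\leq n}$ depends only on $\alpha^{\leq n}$ and $\beta^{\leq n}$; the analogous compatibility for $\circ$ is visible from its defining sum in Definition \ref{defn-of-three-prods-on-hhc}.(1). Hence truncating the morphism (resp.\ homotopy) equation to tensor degree $\leq n-1$ yields exactly the morphism (resp.\ homotopy) equation for $f^{\leq n-1}$ (resp.\ $r^{\leq n-1}$) over the \A n-algebra $(A, \nuln[n-1])$, and both implications of the biconditionals are then immediate. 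The only real bookkeeping is tracking the $n$ versus $n-1$ offset forced by the convention that \A n-modules are defined over \A {n-1}-algebras; there is no genuine obstacle beyond writing out these formulas.
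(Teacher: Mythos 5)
Your proposal is correct and matches the approach the paper intends: the paper itself omits this proof, remarking only that it is "similar to the algebra analogues," and your argument carries out that analogy — reducing part 1 to Lemma \ref{lem:ainf-iff-analln}.(2) via the identification of \Ai-module structures with \Ai-algebra morphisms into $\End M$, and for parts 2 and 3 verifying the same truncation-compatibility of the $\circ$ and $\star$ products that underlies the proof of Lemma \ref{lem:ainf-iff-analln}.
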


\begin{defn}
  Let $(A, \nuln)$ be a nonunital \A n-algebra.
  \begin{enumerate}
  \item If $(M, p^{\leq n-1})$ is an \A n-module over
    $(A, \nuln [n-1]),$ an element
    $p^{n} \in \hhcn A {\End M}_{0}$ \emph{extends
      $p^{\leq n-1}$} if $(M, p^{\leq n})$ is an \A {n+1}-module
     over $(A, \nuln).$ The \emph{obstruction} (to extending
    $p^{\leq n-1}$) is
    \begin{displaymath}
      \obs{p^{\leq n-1}} = -\piln{\nuend^{\leq n} * p^{\leq n-1} - p^{\leq n-1} \circ
        \nuln} \in \hhcn A {\End M}_{-1}.
    \end{displaymath}
\item If $(M, p^{\leq n}_{M})$ and $(N, p^{\leq n}_{M})$ are \A
  {n+1}-modules over $(A, \nuln)$ and $f^{\leq n-1}: (M, p^{\leq
    n-1}_{M})\to (N, p^{\leq n-1}_{M})$ a morphism of \A n-modules, an
  element $f^{n} \in \hhcn A {\Hom {}M N}_{1}$ \emph{extends $f^{\leq
      n-1}$} if $f^{\leq n}$ is a morphism of \A {n+1}-modules $(M,
  p^{\leq n}_{M}) \to (N, p^{\leq n}_{M}).$ The
  \emph{obstruction} (to extending $f^{\leq n-1}$) is
\begin{gather*}
\obs{f^{\leq n-1}} = - \piln{p_{N}^{\leq n} \star f^{\leq n - 1} +
  f^{1 \leq i \leq n-1}\circ \nuln + f^{\leq n-1} \star p_{M}^{\leq
                     n}}\\
  \in \hhcn A {\Hom {} M N}_{0}.
\end{gather*}
  
\item If $f^{\leq n}, g^{\leq n}: (M,
  p^{\leq n}_{M}) \to (N, p^{\leq n}_{M})$ are morphisms of \A
  {n+1}-modules, and $r^{\leq n-1}$ is a homotopy between $f^{\leq n-1}$
  and $g^{\leq n-1},$ an element $r^{n} \in \hhcn A {\Hom {} M N}_{2}$
  \emph{extends $r^{\leq n - 1}$} if $r^{\leq n}$ is a homotopy between
  $f^{\leq n}$ and $g^{\leq n}.$ The \emph{obstruction} (to extending
  $r^{\leq n-1}$) is
\begin{gather*}
\obs{r^{\leq n - 1}} = f^{n} - g^{n} - p_{N}^{\leq n} \star r^{\leq n
  -1} - r^{\leq n-1} \circ \nuln - r^{\leq n - 1}\star p_{M}^{\leq
                       n}\\
  \in \hhcn A {\Hom {} M N}_{1}.
\end{gather*}
  \end{enumerate}
\end{defn}

\begin{prop}\label{prop:obstructions-for-modules-are-cycles}
  Let $(A, \nuln)$ be a nonunital \A n-algebra.
  \begin{enumerate}
  \item If $(M, p^{\leq n-1})$ is an \A n-module over
    $(A, \nuln [n-1]),$ then the obstruction $\obs{p^{\leq n -1}}$ is a
    cycle in $(\hhcn A {\End M}, \dab [A\End]).$ An
    element $p^{n}$ in $\hhcn A {\End M}_{0}$ extends $p^{\leq n-1}$
    if and only if $\dab [A\End](p^{n}) = \obs{p^{\leq n-1}}.$
    
\item If $(M, p^{\leq n}_{M})$ and $(N, p^{\leq n}_{M})$ are \A
  {n+1}-modules over $(A, \nuln)$ and $f^{\leq n-1}: (M, p^{\leq
    n-1}_{M})\to (N, p^{\leq n-1}_{M})$ is a morphism of \A n-modules,
  then the obstruction $\obs{f^{\leq n-1}}$ is a cycle in $(\hhcn A {\Hom {} M N}, \dab [A\oHom]).$ An
    element $f^{n}$ in $\hhcn A {\Hom {} MN}_{1}$ extends $f^{\leq n-1}$
    if and only if $\dab [A\oHom](f^{n}) = \obs{f^{\leq n-1}}.$
  
\item If $f^{\leq n}, g^{\leq n}: (M,
  p^{\leq n}_{M}) \to (N, p^{\leq n}_{M})$ are morphisms of \A
  {n+1}-modules, and $r^{\leq n-1}$ is a homotopy between $f^{\leq n-1}$
  and $g^{\leq n-1},$ then the obstruction $\obs{r^{\leq n-1}}$ is a cycle in $(\hhcn A {\Hom {} M N}, \dab [A\oHom]).$ An
    element $r^{n}$ in $\hhcn A {\Hom {} MN}_{2}$ extends $r^{\leq n-1}$
    if and only if $\dab [A\oHom](r^{n}) = \obs{r^{\leq n-1}}.$
  \end{enumerate}
\end{prop}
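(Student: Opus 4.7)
The proof will mirror Proposition \ref{prop:obs-are-cycles-for-algebras} step for step, with the coalgebra isomorphisms of Lemma \ref{lem:isom_hhc_coder} replaced by comodule analogues in which the $\star$ product corresponds to composition.

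Part 1 is essentially a reindexing of Proposition \ref{prop:obs-are-cycles-for-algebras}.(2). By definition, an \A n-module structure on $(M, \delta_M)$ over $(A, \nuln[n-1])$ is a morphism of \A {n-1}-algebras $p_M: A \to \End M,$ and an \A {n+1}-module structure over $(A, \nuln)$ is a morphism of \A n-algebras. Thus extending $p^{\leq n-1}$ to $p^{\leq n}$ as an \Ai-module structure amounts to extending the underlying morphism of \A {n-1}-algebras to a morphism of \A n-algebras, so applying Proposition \ref{prop:obs-are-cycles-for-algebras}.(2) with $B = \End M$ (and with $n$ replaced by $n-1$) yields simultaneously the cycle statement and the extension criterion. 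The formula for $\obs{p^{\leq n-1}}$ matches the algebra obstruction formula since $\nuend^{\leq n} * p^{\leq n-1}$ is literally what appears there.

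For part 2, I would argue directly along the lines of the proof of \ref{prop:obs-are-cycles-for-algebras}.(2). Writing $f^{\leq n} = f^{\leq n-1} + f^n$ and expanding the definitions of $\star$, $\circ$, and $\dab[A\oHom]$, a routine computation yields
\[
\piln{p^{\leq n}_N \star f^{\leq n} + (f^{\leq n})^{\geq 1}\circ \nuln + f^{\leq n} \star p^{\leq n}_M} \;=\; -\obs{f^{\leq n-1}} + \dab[A\oHom](f^n),
\]
which immediately proves the extension criterion. To show $\obs{f^{\leq n-1}}$ is a cycle I translate to the comodule picture: in the spirit of \ref{lem:isom_hhc_coder}, morphisms and $(\alpha,\beta)$-coderivations of the cofree left $\Tcon{\Pi A}$-comodule on $\Pi M$ are in natural bijection with elements of $\hhcu A {\End M}_0$ and $\hhcu A {\Hom{}M N}_*$ respectively, and under these bijections the $\star$ product corresponds to composition. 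The \A {n+1}-module axioms for $p^{\leq n}_M$ and $p^{\leq n}_N$ then translate to the squaring identities $\t p_M \t p_M = 0$ and $\t p_N \t p_N = 0$ modulo tensor degree $\geq n+2$, and the cycle property of $\obs{f^{\leq n-1}}$ follows formally from associativity of composition together with these identities, exactly as $d_A d_A = 0$ and $d_B d_B = 0$ were used in the algebra case.

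Part 3 is a variant of part 2 using the $(\alpha,\beta)$-coderivation form of the comodule isomorphism to interpret the homotopy $r^{\leq n}$ as a coderivation sandwiched between the comodule morphisms associated to $f^{\leq n}$ and $g^{\leq n}.$ A direct expansion gives an analogous identity expressing the defining condition for an extension of $r^{\leq n-1}$ as $-\obs{r^{\leq n-1}} + \dab[A\oHom](r^n),$ and the cycle computation then uses the same squaring identities together with the fact that $f^{\leq n}$ and $g^{\leq n}$ are morphisms of \A {n+1}-modules. The main technical obstacle, as in the algebra proof, lies in setting up the comodule isomorphisms with the correct signs and shifts; once this bookkeeping is in place, the cycle calculations become formal consequences of associativity and the relevant squared-to-zero identities.
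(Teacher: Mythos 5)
The paper does not actually prove this proposition: it states after the module definitions that ``Proofs are not included, but are similar to their algebra analogues.'' Your proposal fills that gap along the expected lines, and is correct. Your part~1 observation --- that extending an \A{n}-module structure to an \A{n+1}-module structure is, by the paper's own definitions, \emph{exactly} extending a morphism of \A{n-1}-algebras $A \to \End M$ to a morphism of \A{n}-algebras, so that the whole statement is Proposition~\ref{prop:obs-are-cycles-for-algebras}.(2) specialized to $B = \End M$ with $n$ replaced by $n-1$ --- is in fact sharper than the paper's blanket appeal to analogy: part~1 needs no new argument at all, and even the obstruction formulas match term-by-term (the differential $\dab[A\End]$ being the Hom-complex differential with $\nu^1_B = \nuend^1 = \delta_{\Pi\oHom}$). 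For parts~2 and~3, which genuinely involve new structure (the $\star$ product between $\hhcu A{\Hom{}N P}$ and $\hhcu A{\Hom{}M N}$ rather than just $\End M$), your plan to transport the algebra proof through the comodule picture --- cofree $\Tcon{\Pi A}$-comodules on $\Pi M$, $\Pi N$, with $\star$ corresponding to composition and the \A{n+1}-module axioms to squaring identities modulo high tensor degree --- is the standard and correct route and mirrors the coalgebra mechanism of Lemma~\ref{lem:isom_hhc_coder}. The only caveat is that the comodule isomorphisms themselves are asserted, not established, and do require care with the shift $\hhcn[l-1]A{\End M}_0 \cong \Hom{}{(\Pi A)^{\otimes l-1}\otimes M}M_{-1}$; but you flag this explicitly, and those isomorphisms are well-documented elsewhere (e.g., \cite{SUal}, \cite{LH}), so this is a matter of citation rather than a gap in the argument.
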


\begin{lem}\label{lem:strict-unit-adjustment-for-module}
  Let $(A, 1, \nuln)$ be an \A n-algebra with split unit.
  \begin{enumerate}
  \item If $(M, p^{\leq n-1} = \o p^{\leq n-1} + \gsu)$ is a strictly unital \A n-module over
    $(A, \nuln [n-1]),$ with $\o p^{\leq n-1} \in \hhclnu {\o A} {\End
    M},$ then the obstruction $\obs{p^{\leq n-1}}$ is in
    $\hhcn {\o A} {\End M}_{0}.$ For $\o p^{n} \in \hhcn {\o A}
    {\End M}_{0},$ $\o p^{\leq n} + \gsu$ is a strictly unital \A
    {n+1}-module over $(A, 1, \nuln)$ if $\dab[\overbar{A}\End](\o p^{n})
    = \obs{p^{\leq n-1}}.$
    
\item If $(M, p^{\leq n}_{M})$ and $(N, p^{\leq n}_{M})$ are strictly
  unital \A
  {n+1}-modules over $(A, \nuln),$ and $f^{\leq n-1}: (M, p^{\leq
    n-1}_{M})\to (N, p^{\leq n-1}_{M})$ is a morphism of strictly unital \A
  n-modules, then the obstruction $\obs{f^{\leq n-1}}$ is in $\hhcn {\o A} {\Hom {} M N}.$ For $f^{n} \in \hhcn {\o A} {\Hom {} MN}_{1},$ $f^{\leq n-1}$
    is a morphism $(M, p^{\leq n}_{M}) \to (N, p^{\leq n}_{N})$ if $\dab [\overbar{A}\oHom](f^{n}) = \obs{f^{\leq n-1}}.$

\item If $f^{\leq n}, g^{\leq n}: (M,
  p^{\leq n}_{M}) \to (N, p^{\leq n}_{M})$ are morphisms of strictly
  unital \A
  {n+1}-modules and $r^{\leq n-1}$ is a strictly unital homotopy between $f^{\leq n-1}$
  and $g^{\leq n-1},$ then the obstruction $\obs{r^{\leq n-1}}$ is in
 $\hhcn {\o A} {\Hom {} M N}.$ For
     $r^{n} \in \hhcn {\o A} {\Hom {} MN}_{2},$  $r^{\leq n}$
    is a homotopy between $f^{\leq n}$ and $g^{\leq n}$ if $\dab
    [{\overbar A}\oHom](r^{n}) = \obs{r^{\leq n-1}}.$
  \end{enumerate}
\end{lem}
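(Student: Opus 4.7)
The plan is to model the proof closely on Lemma \ref{lem:sual-obs-live-in-subcomplex}, exploiting the fact that the module obstructions are formed from the $\star$ and $\circ$ products in the same structural way that the algebra obstructions are formed from $*$ and $\circ$. In each of the three parts I would first split the given strictly unital data along the distinguished unit using the decompositions from Definition \ref{defn:split-elt}: write $p^{\leq n-1}_{M} = \o p^{\leq n-1}_{M} + \gsu$ (and similarly for $p_{N}$), $f^{\leq n-1} \in \hhclnu[n-1]{\o A}{\Hom{}MN}_{1}$, and $r^{\leq n-1} \in \hhclnu[n-1]{\o A}{\Hom{}MN}_{2}$, together with $\nuln = \muln + \msu$ for the algebra. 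Then apply Proposition \ref{prop:obstructions-for-modules-are-cycles} to reduce the extension criterion to solving $\dab(\text{correction}) = \obs{-}$, and check that the obstruction in fact lies in the subcomplex $\hhcn{\o A}{-}$ so that a correction in this subcomplex suffices.

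For part 1, I would expand
\[
\nuend^{\leq n} * p^{\leq n-1} - p^{\leq n-1}\circ \nuln
\]
using the decompositions above and show that every resulting summand evaluates to zero on any tensor of the form $[a_{1}|\ldots|1|\ldots|a_{k-1}]$ whose inputs include the unit $1 \in A$. The pieces purely in $\gsu$ and $\msu$ cancel because $\gsu$ defines a strictly unital \Ai-module structure on $M$ over $(k,\msu)$ (this is the content of the trivial/split-unit model), and the mixed terms cancel by the same identities cited from \cite{SUal} that were used in the algebra case, e.g.\ the compatibility of $\o p^{\leq n-1}$ with $\msu$ and $\gsu$. Once the obstruction is confirmed to live in $\hhcn{\o A}{\End M}$, Proposition \ref{prop:obstructions-for-modules-are-cycles}.(1) shows that $\o p^{n} + \gsu$ extends $p^{\leq n-1}$ as an \A{n+1}-module structure whenever $\dab[\overbar A\End](\o p^{n}) = \obs{p^{\leq n-1}}$, and strict unitality of $\o p^{\leq n} + \gsu$ is automatic from $\o p^{n} \in \hhcn{\o A}{\End M}$.

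Parts 2 and 3 proceed in exactly the same manner, with the obstruction
\[
\obs{f^{\leq n-1}} = -\piln{p_{N}^{\leq n}\star f^{\leq n-1} + f^{1\leq i\leq n-1}\circ \nuln + f^{\leq n-1}\star p_{M}^{\leq n}}
\]
(respectively $\obs{r^{\leq n-1}}$) expanded via $f^{\leq n-1} = f^{\leq n-1}$ in $\hhclnu[n-1]{\o A}{\Hom{}MN}_{1}$, $p_{M}^{\leq n} = \o p_{M}^{\leq n}+\gsu$, $p_{N}^{\leq n} = \o p_{N}^{\leq n}+\gsu$, and $\nuln = \muln+\msu$. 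The terms in which an input $1 \in A$ appears are grouped so that they cancel in pairs: a $\star$ term involving $\gsu$ on the $N$ side is matched against the corresponding $\circ$ term where $\nuln$ evaluates along $\msu$, and the remaining pieces vanish because $f^{\leq n-1}$ and $r^{\leq n-1}$ already annihilate any tensor containing $1$. One then invokes the relevant part of Proposition \ref{prop:obstructions-for-modules-are-cycles} to finish.

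The main obstacle will be organizing the bookkeeping of the expansions of $\star$ and $\circ$ into the many summands indexed by positions of the unit input, and verifying that the pairwise cancellations are exactly those produced by the unit identities in \cite{SUal}. This is conceptually the same computation as in the algebra case, but the two-sided nature of the $\star$ product (acting on the left by $p_{N}$ and on the right by $p_{M}$) roughly doubles the number of cases, so care is needed to ensure the signs introduced by the convention in \S\ref{sect:notation}.(4) align in each pair.
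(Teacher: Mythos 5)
Your proposal matches the paper's intended approach: the paper explicitly omits proofs of the module analogues, stating only that they ``are similar to their algebra analogues,'' and you correctly mirror the proof of Lemma~\ref{lem:sual-obs-live-in-subcomplex}, decomposing the data along the split unit, checking that the obstruction kills any tensor containing $1$ via the unit identities from \cite{SUal}, and then invoking Proposition~\ref{prop:obstructions-for-modules-are-cycles} for the extension criterion. One small point worth carrying over from the algebra case that you leave implicit: the refinement $\muln = \omuln + \hln$ (splitting along the codomain as well) was what let the paper isolate the surviving terms; the module analogue will want a similar secondary splitting of $\o p^{\leq n-1}$ when organizing the $\star$ expansions, but this is bookkeeping rather than a conceptual gap.
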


\section{Proof of main results}
\begin{proof}[Proof of Theorem \ref{thm:main-transfer-result}]
We first prove part 1. Assume that $n \geq 1,$ $(A, 1, \nuln_{A})$ is an \A
n-algebra with split unit, and the following diagram is
commutative for all $1 \leq i \leq n:$
\begin{equation}\label{eq:comm-diag-in-main-proof}
  \begin{tikzcd}
    (\Pi A)^{\otimes i} \ar[r, "\nu^{i}_{A}"] \ar[d, "q^{\otimes
      i}"'] & \Pi A \ar[d, "q"]\\
    (\Pi B)^{\otimes i} \ar[r, "\nu^{i}_{B}"'] & \Pi B.
  \end{tikzcd}
\end{equation}
This holds for $n = 1$ by hypothesis. We will construct $\nu^{n+1}_{A}$ such that \eqref{eq:comm-diag-in-main-proof} is commutative for $i
= n+1.$ The cases $n = 1$ and $n \geq 2$ require different proofs, but
both use the following morphisms of chain complexes:
\begin{align*}
\varphi &= \hhcn [n+1] {\o A} q: (\hhcn [n+1] {\o A} A, \dab
          [\overbar{A}A]) \to (\hhcn [n+1] {\o A} B, \doab),\\
  \phi &= \hhcn [n+1] q {B}: (\hhcn [n+1] {B} B, \dab [BB])
 \to (\hhcn [n+1] {A} B, \dab).
\end{align*}
Since $((\Pi \o A)^{\otimes n+1}, \delta_{\otimes})$ is a
semiprojective complex and $q$ is a surjective
quasi-isomorphism,\footnote{To see that $(\Pi \o A)^{\otimes n+1},
  \delta_{\otimes})$ is semiprojective, note that $\o A = \ker( (A,
  d_{A}) \to (k, 0))$ is semiprojective, and the tensor product of two
semiprojectives is semiprojective.} $\varphi$ is also a surjective
quasi-isomorphism. These maps fit into
the following diagram, where the unlabeled morphism is inclusion,
\begin{displaymath}
  \begin{tikzcd}
    & \hhcn [n+1] {\o A} A \ar[d, "\simeq"' pos = .3, "\varphi" pos = .3, twoheadrightarrow]\\
    & \hhcn [n+1] {\o A} B \ar[d]\\
    \hhcn [n+1] B B \ar[r, "\phi"'] & \hhcn [n+1] {A} B.
  \end{tikzcd}
\end{displaymath}

Assume $n = 1.$ We construct $\nu^{2}_{A}$ such that
\eqref{eq:comm-diag-in-main-proof} is commutative. Consider the
element $\zeta = \phi(\nu^{2}_{B}) - q\msu^{2} \in \hhcn [2] A
B_{-1}.$ We have $\zeta[a|1] = 0 = \zeta[1|a]$ for all $a
\in A,$ since $\nu_{B}$ is strictly unital and $q[1] = [1].$ Thus
$\zeta$ is in $\hhcn [2] {\o A} B_{-1}.$ Using the surjectivity of
$\varphi,$ choose $\mu^{2} \in \hhcn [2] {\o A} A$ such that
$\varphi(\mu^{2}) = \zeta.$ We have, using that $\varphi$ is a
morphism of chain complexes for the first equality,
\begin{align*}
\varphi(\dab [\overbar{A}A](\mu^{2})) = \doab(\varphi(\mu^{2})) &=
\doab(\zeta)\\ 
&= \dab(\phi(\nu^{2}_{B})) - q\dab [AA](\msu^{2})\\
&= \phi(\dab [BB](\nu^{2}_{B})) - \varphi(\dab[AA](\msu^{2})),
\end{align*}
where we can write $q\dab [AA](\msu^{2}) = \varphi(\dab[AA](\msu^{2}))$
since $\dab[AA](\msu^{2})$ is in $\hhcn [2] {\o A} A.$ Since
$\nu^{\leq 2}_{B}$ is an \A 2-algebra structure, and
$\obs{\nu^{1}_{B}} = 0,$ we have $\dab [BB](\nu^{2}_{B}) = 0$ by
\ref{prop:obs-are-cycles-for-algebras}.(1). Rearranging
the equation directly above shows $\varphi( \dab [\overbar{A}A](\mu^{2}) + \dab
[AA](\msu^{2})) = 0,$ i.e., $ \dab [\overbar{A}A](\mu^{2}) + \dab
[AA](\msu^{2}) \in \ker \varphi.$ Since $\varphi$ is a surjective
quasi-isomorphism, $\ker \varphi$ is acyclic, and thus there exists
$\t \mu^{2}$ with $\dab [\overbar{A}A](\t \mu^{2}) =  \dab [\overbar{A}A](\mu^{2}) + \dab
[AA](\msu^{2}).$ Set $\mu^{2}_{A} = \mu^{2} - \t \mu^{2}.$ We then
have $\dab [\overbar{A}A](\mu^{2}_{A}) + \dab [AA](\msu^{2}) = 0,$ and
so
by \ref{lem:sual-obs-live-in-subcomplex}.(1), setting $\nu_{A}^{2} =
\mu^{2}_{A} + \msu^{2}$ makes $(A, 1, \nu^{\leq 2}_{A})$ into an \A
2-algebra with split unit. Moreover,
\begin{align*}
  \phi(\nu^{2}_{B}) &= q(\mu^{2} + \msu^{2})\\
  &= q(\mu^{2} - \t\mu^{2} + \msu^{2}) = qv^{2}_{A},
\end{align*}
where the second equality uses that $\t \mu^{2}$ is in $\ker \varphi.$
Thus \eqref{eq:comm-diag-in-main-proof} is commutative for
$i = 2.$

Assume now that $n \geq 2.$ We continue to use the morphisms
$\varphi$ and $\phi$ defined above. We have, where the first equality is by definition,
\begin{align*}
  \phi(\obs{\nuln_{B}}) &= \piln [n+1]{\nuln_{B} \circ \nuln_{B}}
                          q^{\otimes n+1}\\
  &= q \piln[n+1]{\nuln_{A} \circ \nuln_{A}} = \varphi(\obs{\nuln_{A}}).
\end{align*}
(The second equality follows from the
commutativity of \eqref{eq:comm-diag-in-main-proof} for all $1 \leq i
\leq n,$ and the third from the fact that $\obs{\nuln_{A}}$ is in
$\hhcn [n+1] {\o A} A,$ by
\ref{lem:sual-obs-live-in-subcomplex}.(1).) Since $\nu_{B}$ is strictly
unital and $q[1] = [1],$ the element $\phi(\nu_{B}^{n+1})$ is in
$\hhcn [n+1] {\o A}B.$ Using the surjectivity of $\varphi,$ choose
$\mu^{n+1}$ in $\hhcn [n+1] {\o A} A$ with $\varphi(\mu^{n+1}) =
\phi(\nu_{B}^{n+1}).$ We then have, where the first equality follows
from \ref{prop:obs-are-cycles-for-algebras}.(1),
\begin{align*}
 0 = \phi( \dab [BB](\nu^{n+1}_{B}) - \obs{\nuln_{B}}) &=
  \doab(\phi(\nu^{n+1}_{B})) - \varphi(\obs{\nuln_{A}})\\
&= \doab(\varphi(\mu^{n+1}))- \varphi(\obs{\nuln_{A}})\\
&= \varphi(\dab [\overbar{A}A](\mu^{n+1}) - \obs{\nuln_{A}}),
\end{align*}
i.e., $\dab [\overbar{A}A](\mu^{n+1}) - \obs{\nuln_{A}} \in \ker
\varphi.$ Since this element is a cycle, by
\ref{lem:sual-obs-live-in-subcomplex}.(1), and $\ker \varphi$ is
acyclic, there exists $\t \mu^{n+1}$ in $\hhcn [n+1] {\o A} A$ with $\dab [\overbar{A}A](\t \mu^{n+1}) = \dab
[\overbar{A}A](\mu^{n+1}) - \obs{\nuln_{A}}.$ Set $\mu^{n+1}_{A} =
\mu^{n+1} - \t \mu^{n+1},$ so $\dab [\overbar{A}A](\mu^{n+1}_{A}) =
\obs{\nuln_{A}}.$ By \ref{lem:sual-obs-live-in-subcomplex}.(1), $\nuln
[n+1]_{A} = \muln[n+1]_{A} + \msu$ is a strictly unital \A
{n+1}-algebra structure. Moreover, since $\varphi(\mu^{n+1}_{A}) =
\varphi(\mu^{n+1}) = \phi(\nu^{n+1}_{B}),$ and $\nuln[n+1]_{A}, \nuln[n+1]_{B}$ are both strictly unital, \eqref{eq:comm-diag-in-main-proof} is commutative for $i = n+1.$ It now follows by induction and Lemma
\ref{lem:ainf-iff-analln} that there is a strictly unital element $\nu_{A} \in \hhc
{A} A_{-1}$ such that $(A, 1, \nu_{A})$ is an \Ai-algebra
with split unit and such that \eqref{eq:comm-diag-in-main-proof} is
commutative for all $i \geq 1.$ Thus $q$ is a strict morphism
$(A,\nu_{A}) \to (B, \nu_{B}).$

If $(B, \nu_{B})$ is augmented, then one can replace $\hhcn [n+1] B B$ with
$\hhcn [n+1] {\o B} {\o B},$ $\hhcn [n+1] A B$ with $\hhcn [n+1] {\o
  A} {\o B},$ and $\hhcn [n+1] {\o A} A$ with $\hhcn [n+1] {\o A} {\o
  A},$ and mimick the previous proof to construct $\mu_{A} \in
\hhc {\o A} {\o A}$ such that $\nu_{A} = \mu_{A} + \msu$ is an
augmented \Ai-structure and $q$ is a strict morphism.

We now prove part 2. Write $\alpha = 
\beta + \gsu,$ with $\beta \in \hhc {\o C} B_{0}.$ Assume that $n \geq
1,$ and $\gamma^{\sleq n} + \gsu$ is a strictly
unital morphism of \A n-algebras such that $q \gamma^{\sleq n} = 
\beta^{\sleq n}.$ This holds for $n = 1$ by hypothesis.
Define $\psi$ as follows, 
\begin{displaymath}
\psi =  \hhcn [n+1] {\o C} q: (\hhcn [n+1] {\o C} A, \dab [\overbar{C}A]) \to (\hhcn [n+1] {\o C} B,
\dab [\overbar{C}B]).
\end{displaymath}
Since $q$ is a surjective quasi-isomorphism and $(\Pi \o C)^{\otimes
  n+1}$ is semiprojective, $\psi$ is a surjective quasi-isomorphism. We now calculate:
\begin{align*}
  \psi(\obs{\delta^{\sleq n}}) &= q \piln [n+1] { \gamma^{\sleq n} \circ \nuln [n+1]_{C} -
                     \nuln[n+1]_{A} * \gamma^{\sleq n}}\\
&= \piln[n+1] {(q\gamma^{\sleq n}) \circ \nuln[n+1]_{C} - (q \nuln[n+1]_{A})*\gamma^{\sleq n}
  }\nonumber\\
&= \piln[n+1]{\bln \circ \nuln[n+1]_{C} - (\nuln[n+1]_{B}*q)*\gamma^{\sleq n}}
  \nonumber\\
&= \piln[n+1]{\bln \circ \nuln[n+1]_{C} - \nuln[n+1]_{B}*\bln} = \obs{\aln}.
  \nonumber
\end{align*}
Using the surjectivity of $\psi,$ choose $\t \gamma \in \hhcn [n+1]{\o
  C} A_{0}$ with $\psi(\t \gamma) = \beta^{n+1}.$ Thus
\begin{align*}
\psi(\dab[\overbar{C}A](\t \gamma) - \obs{\delta^{\sleq n}}) &= \dab[\overbar{C}B] \psi(\t \gamma) -
  \obs{\aln}\\
&= \dab[\overbar{C}B](\beta^{n+1}) - \obs{\aln} = 0,\nonumber
  \end{align*}
where the last equality holds by
\ref{lem:sual-obs-live-in-subcomplex}.(2). Thus
$\dab[\overbar{C}A](\t \gamma) - \obs{\delta^{\sleq n}}$ is in
$\ker \psi.$ By \ref{prop:obs-are-cycles-for-algebras}.(2)
$\obs{\delta^{\sleq n}}$ is a cycle, and thus $\dab[CA](\t \gamma) -
\obs{\delta^{\sleq n}}$ is also a cycle. Since $\psi$ is a quasi-isomorphism,
$\ker \psi$ is acyclic, and thus there exists $\epsilon \in \ker
\psi$ with
$\dab[\overbar{C}A](\epsilon) = \dab[\overbar{C}A](\t \gamma) -
\obs{\delta^{\sleq n}}.$ Set
$\gamma^{n+1} = \t \gamma - \epsilon,$ so $\dab[\overbar{C}A](\gamma^{n+1})
- \obs{\delta^{\sleq n}} = 0.$ By \ref{lem:sual-obs-live-in-subcomplex}.(2),
${\delta}^{\sleq n} = {\gamma}^{\sleq n+1} + \gsu$ is a strictly
unital morphism of \A {n+1}-algebras. Also, $p \gamma^{n+1} = \psi(\gamma^{n+1}) = \psi(\t \gamma) =
\beta^{n+1},$ since $\psi(\epsilon) = 0.$ Thus, by induction,
there exists $\gamma \in \hhc {\o C} A_{0},$ such that $\delta = 
\gamma + \gsu$ is a strictly unital morphism with $q \delta = \alpha.$

To prove part 3, let $\delta' = \gamma' + \gsu$ be another strictly
unital morphism lifting $\alpha$ through $q$, and assume that
$H_{0}(\hhcn [n+1] {\o C} A, \dab [\overbar{C}A])= 0$ for all $n \geq 1.$  Since $q \gamma^{1}= q\gamma'^{1},$ $\gamma^{1} - \gamma'^{1}$ is in
  $\ker \psi.$ This is element is a cycle, since $\gamma^{1}$ and $\gamma'^{1}$ are cycles.
 Because $\ker \psi$ is acyclic,
  there exists $r^{1} \in \hhcn [1] {\o C} {A}_{1}$ with
  $\dab[\overbar{C}A](r^{1}) = \gamma^{1} - \gamma'^{1}.$ Assume by
  induction $\rln$ is a
  strictly unital homotopy between $\bln$ and $\gamln.$ Since
  $\obs{\rln}$ is a cycle in $\hhcn [n+1] {\o C} A_{0},$ and
  $H_{0}(\hhcn [n+1] {\o C} A, \dab [CA])= 0$, there exists
  $r^{n+1}$ with $\dab[\overbar{C}A](r^{n+1}) = \obs{\rln}.$ Thus by
  \ref{lem:sual-obs-live-in-subcomplex}.(3), $\rln [n+1]$ is a
  strictly unital homotopy between $\gamma^{\sleq n+1}$ and
  $\gamma'^{\sleq n+1}.$ Now by induction, and
  \ref{lem:ainf-iff-analln}.(3), there exists a strictly unital
  homotopy between $\delta$ and $\delta'.$
\end{proof}

\begin{proof}[Proof of Theorem \ref{thm:transfer-of-mod-strs}]
 We first prove part 1. Write $p_{M} = \o p_{M} + \gsu$ with $\o p_{M}
 \in \hhc {\o A} {\End M}_{0}.$ Assume that $n \geq 1$ and $\o
 p_{G}^{\leq n-1}$ is an element in $\hhcln [n-1] {\o A} {\End G}_{0}$
 such that $(G,\o p_{G}^{\leq n-1} + \gsu)$ is a strictly unital
 module over $(A, 1, \nuln)$ and the following diagram is commutative
 for all $0 \leq i \leq n-1:$
 \begin{equation}\label{eq:comm-diag-in-proof-mod-transfer}
  \begin{tikzcd}
    (\Pi \o A)^{\otimes i} \ar[r, "\o p^{i}_{G}"] \ar[d, "\o
    p^{i}_{M}"'] & \Pi \End G \ar[d, "\Pi q_{*}"]\\
    \Pi \End M \ar[r, "\Pi q^{*}"'] & \Pi \Hom {} G M.
  \end{tikzcd}
\end{equation}
This holds for $n = 1$ by hypothesis (note that $\o p^{0} =
p^{0}$). The morphism $q_{*}: (\End G, \dhom) \to (\Hom {} G
M, \dhom)$ is a surjective quasi-isomorphism, since $q$ is a
surjective quasi-isomorphism and $(G,
p^{0}_{G})$ is semiprojective. Consider the following morphisms of
complexes:
\begin{align*}
  \varphi &= \hhcn {\o A} {q_{*}}: (\hhcn {\o A} {\End G},
            \dab [\overbar{A}\End]) \to  (\hhcn {\o A} {\Hom {} G M},
            \dab[\overbar{A}\oHom])\\
\phi &= \hhcn {\o A} {q^{*}}: (\hhcn {\o A} {\End M},
            \dab [\overbar{A}\End]) \to  (\hhcn {\o A} {\Hom {} G M},
            \dab[\overbar{A}\oHom]).
\end{align*}
Since $q_{*}$ is a surjective quasi-isomorphism and $((\Pi \o
A)^{\otimes n}, \delta_{\otimes})$ is semiprojective, $\varphi$ is a
surjective quasi-isomorphism. We have the following diagram:
\begin{displaymath}
  \begin{tikzcd}
    & \hhcn {\o A} {\End P} \ar[d, "\simeq"' pos = .3, "\varphi" pos = .3, twoheadrightarrow]\\
    \hhcn {\o A} {\End M} \ar[r, "\phi"'] & \hhcn {\o A} {\Hom {} P M}.
  \end{tikzcd}
\end{displaymath}
Using the surjectivity of $\varphi,$ choose $\o p^{n}$ in $\hhcn {\o
  A}{\End P}$ with $\varphi(\o p^{n}) = \phi(\o p_{M}^{n}).$ Using that
$\varphi$ and $\phi$ are morphisms of complexes, we have $\varphi(\dab
[\overbar{A}\End](\o p^{n})) = \phi(\dab
[\overbar{A}\End](\o p^{n}_{M})).$
We also claim that
\begin{equation}
\phi(\obs{p_{M}^{\leq
    n-1}}) = \varphi(\obs{p_{G}^{\leq n-1}}).\label{eq:proof-module-main-thm}
\end{equation}
Assuming the claim (we give a proof in the
next paragraph), we then have
\begin{displaymath}
\varphi(\dab
[\overbar{A}\End](\o p^{n}) - \obs{\o p_{G}^{\leq n-1}}) = \phi(\dab
[\overbar{A}\End](\o p^{n}_{M}) - \obs{\o p_{M}^{\leq n-1}}) = 0,
\end{displaymath}
where the last equality follows from
\ref{lem:strict-unit-adjustment-for-module}.(1). Thus $\dab
[\overbar{A}\End](\o p^{n}) - \obs{\o p_{G}^{\leq n-1}}$ is in $\ker
\varphi.$ Since $\obs{\o p_{G}^{\leq n-1}}$ is a cycle in $(\hhcn {\o
  A} {\End G}, \dab
[\overbar{A}\End])$ and $\ker \varphi$ is acyclic (since $\varphi$ is
a quasi-isomorphism), there exists $\t p^{n} \in \ker \varphi \subseteq
\hhcn {\o A} {\End G}$
with $\dab
[\overbar{A}\End](\t p^{n}) = \dab
[\overbar{A}\End](\o p^{n}) - \obs{\o p_{G}^{\leq n-1}}.$ Set $\o
p^{n}_{G} = \o p^{n} - \t p^{n},$ so $\dab
[\overbar{A}\End](\o p^{n}_{G}) = \obs{\o p_{G}^{\leq n-1}}.$ By
\ref{lem:strict-unit-adjustment-for-module}.(1), $\o p_{G}^{\leq n} +
\gsu$ is a strictly unital \A n-module structure on $G.$ Moreover, 
\begin{displaymath}
\Pi q_{*}(\o p^{n}_{G}) = \varphi(\o p^{n} - \t p^{n}) = \varphi(\o
p^{n}) = \phi(\o p^{n}_{M}) = \Pi q^{*}(\o p^{n}_{M}),
\end{displaymath}
where the first equality is by definition of $\varphi$ and $\o
p^{n}_{G},$ the second because $\t p^{n}$ is in $\ker \varphi,$ the
third by choice of $\o p^{n},$ and the fourth by definition of $\phi.$
Thus \eqref{eq:comm-diag-in-proof-mod-transfer} is commutative for $i
= n.$ It now follows by induction and
\ref{lem:ainf-mod-iff-an-mod-all-n}.(1) that there exists $\o p_{G} \in
\hhc {\o A} {\End G}_{0}$ such that $(G, \o p_{G} + \gsu)$ is a
strictly unital \Ai-module over $(A, 1, \nu).$ By induction
\eqref{eq:comm-diag-in-proof-mod-transfer} is commutative for all $i,$
and thus $q$ is a strict morphism of strictly unital modules.

We now prove that \eqref{eq:proof-module-main-thm}
holds. First, we note:
\begin{align*}
  \phi(\obs{p_{M}^{\leq
  n-1}}) &= -\Pi q^{*}(\nuend^{\leq n} * p_{M}^{\leq n-1} - p_{M}^{\leq n-1}
  \circ \nuln)^{\leq n}\\
&= -\Pi q^{*}(\nuend^{1} p_{M}^{\leq n-1} + \nuend^{2} * p_{M}^{\leq n-1} - p_{M}^{\leq n-1}
  \circ \nuln)^{\leq n}.
  \intertext{and}
    \varphi(\obs{p_{G}^{\leq
  n-1}}) &= -\Pi q_{*}(\nuend^{\leq n} * p_{G}^{\leq n-1} - p_{G}^{\leq n-1}
  \circ \nuln)^{\leq n}\\
&= -\Pi q_{*}(\nuend^{1} p_{G}^{\leq n-1} + \nuend^{2} * p_{G}^{\leq n-1} - p_{G}^{\leq n-1}
  \circ \nuln)^{\leq n}.
\end{align*}
Since $\Pi q^{*}$ is a morphism of complexes and $\nuend^{1} =
\delta_{\Pi \oHom},$ we have $\Pi q^{*} \nuend^{1} = \delta_{\Pi
  \oHom} \Pi q^{*}.$ This gives the first equality below, the
second follows from \eqref{eq:comm-diag-in-proof-mod-transfer}, and
the third is analogous to the first,
\begin{displaymath}
\Pi q^{*} \nuend^{1} p_{M}^{\leq n -1} = \delta_{\Pi
  \oHom} \Pi q^{*} p_{M}^{\leq n -1} = \delta_{\Pi
  \oHom} \Pi q_{*} p_{G}^{\leq n -1} = \Pi q_{*} \nuend^{1} p_{G}^{\leq n -1}.
\end{displaymath}
Thus the first terms of $\phi(\obs{p_{M}^{\leq
  n-1}})$ and $\varphi(\obs{p_{G}^{\leq
  n-1}})$ agree. Using
\eqref{eq:comm-diag-in-proof-mod-transfer}, we have that $\Pi q^{*}
p_{M}^{\leq n-1} \circ \nuln = \Pi q_{*} p_{G}^{\leq n-1} \circ \nuln,$ so the
third terms also agree.  Working on the second term, where we denote by
$\gamma$ various composition maps, we have
\begin{align*}
  \Pi q^{*} \nuend^{2} p_{M}^{\leq n-1} &= -\Pi s \gamma
                                          (s^{-1})^{\otimes 2} *
                                          p^{\leq n-1}_{M}\\
&= - sq^{*} \gamma (s^{-1})^{\otimes 2} * p_{M}^{\leq n-1}\\
&= -s\gamma(1 \otimes q^{*})(s^{-1})^{\otimes 2}*p_{M}^{\leq n-1}\\
&= s\gamma(s^{-1})^{\otimes 2}(1 \otimes \Pi q^{*})*p^{\leq n-1}_{M}.
\end{align*}
Analogously, $\Pi q_{*} \nuend^{2} p_{G}^{\leq n-1} =
s\gamma(s^{-1})^{\otimes 2}(1 \otimes \Pi q_{*})*p^{\leq n-1}_{G}.$
Using \eqref{eq:comm-diag-in-proof-mod-transfer} one checks these last
two terms agree.

We now prove part 2. Since $(N, p_{N}^{0})$ is semiprojective and $q$
is a surjective quasi-isomorphism, there exists a morphism of chain
complexes $\delta^{0}: (N, p_{N}^{0}) \to (G, p_{G}^{0})$ such that $q
\delta^{0} = \alpha^{0}.$ Assume by induction that there exists $\delta^{\leq n-1}
\in \hhclnu {\o A} {\Hom {} N G}_{1},$ a morphism of strictly
unital \A n-modules $(N, p_{N}^{\leq n-1}) \to (G, p_{G}^{\leq n-1})$
such that $q \delta^{\leq n -1} =
\alpha^{\leq n-1}.$ Set
\begin{displaymath}
\psi = \hhcn {\o A} {q_{*}}: (\hhclnu {\o A} {\Hom {} N G}, \dab
[\overbar{A}\oHom]) \to  (\hhclnu {\o A} {\Hom {} N M}, \dab
[\overbar{A}\oHom]).
\end{displaymath}
Since $(N, p^{0}_{N})$ is semiprojective and $q$ is a surjective
quasi-isomorphism, $\Pi q_{*}: (\Pi \Hom {} N G, \delta_{\Pi \oHom})
\to (\Pi \Hom {} N M, \delta_{\Pi \oHom})$ is also a surjective
quasi-isomorphism. Since $((\Pi \o A)^{\otimes n}, \delta_{\otimes})$
is semi-projective, $\psi$ is also a surjective quasi-isomorphism.

Using the surjectivity of $\psi,$ choose $\t \delta \in \hhcn {\o A}
{\Hom {} N G}$ with $\psi(\t \delta) = \alpha^{n}.$ We claim
$\psi(\obs{\delta^{\leq n-1}}) = \obs{\alpha^{\leq n-1}}.$ This
follows from the fact that $q$ is a strict morphism and that $q
\delta^{\leq n-1} = \alpha^{\leq n-1}.$ We now have:
\begin{displaymath}
\psi( \dab[\overbar{A}\oHom](\t \delta) - \obs{\delta^{\leq n-1}}) =
\dab[\overbar{A}\oHom](\alpha^{n}) - \obs{\alpha^{\leq n-1}} = 0,
\end{displaymath}
where the last equality holds by
\ref{lem:strict-unit-adjustment-for-module}.(2). Thus,
$\dab[\overbar{A}\oHom](\t \delta) - \obs{\delta^{\leq n-1}}$ is in
$\ker \psi$. This element is a cycle by
\ref{prop:obstructions-for-modules-are-cycles}.(2). Since $\psi$ is a
quasi-isomorphism, $\ker \psi$ is acyclic, and so there exists $\t{\t
  \delta}$ in $\ker \psi$ with $\dab [\overbar{A}\oHom](\t{\t \delta})
= \dab[\overbar{A}\oHom](\t \delta) - \obs{\delta^{\leq n-1}}.$ Set
$\delta^{n} = \t \delta - \t{\t \delta}.$ Since $\dab
[\overbar{A}\oHom](\delta^{n}) = \obs{\delta^{\leq n-1}},$ it follows
from \ref{lem:strict-unit-adjustment-for-module}.(2) that
$\delta^{\leq n}$ is a morphism of strictly unital \A n-modules. Also
$q \delta^{n} = \psi(\delta^{n}) = \alpha^{n}.$ Thus by induction and
\ref{lem:ainf-mod-iff-an-mod-all-n}.(2), there exists $\delta \in
\hhcu {\o A} {\Hom {} N G}$ that is a morphism of strictly unital
\Ai-modules and $q\delta = \alpha.$

The proof of part 3 is analogous to the proof of part 3 of \ref{thm:main-transfer-result}.
\end{proof}

\def\cprime{$'$}


\end{document}